\newtheorem{theorem}{Theorem}
\newtheorem{lemma}[theorem]{Lemma}
\newtheorem{claim}{Claim}
\newtheorem{observation}[theorem]{Observation}
\newtheorem{conjecture}[theorem]{Conjecture}
\newtheorem{question}[theorem]{Question}
\newtheorem{problem}[theorem]{Problem}
\renewcommand{\leq}{\leqslant}
\renewcommand{\geq}{\geqslant}
\newcommand*{\eps}{\varepsilon}
\newcommand*{\N}{\mathbb{N}}
\newcommand*{\abs}[1]{\left \lvert {#1} \right \rvert}
\newcommand*{\ceil}[1]{\left \lceil {#1} \right \rceil}
\newcommand*{\floor}[1]{\left \lfloor {#1} \right \rfloor}
\DeclarePairedDelimiter{\set}{\lbrace}{\rbrace}
\newcommand\blfootnote[1]{
  \begingroup
  \renewcommand\thefootnote{}\footnote{#1}
  \addtocounter{footnote}{-1}
  \endgroup
}
\title{\texorpdfstring{\vspace{-4ex}}{}A note on interval colourings of graphs}
\date{17 May 2023}
\author{
Maria Axenovich\footnote{Karlsruhe Institute of Technology, Karlsruhe, Germany. Research supported in part by DFG grant FKZ AX 93/2-1.} \;
Ant\'onio Gir\~ao\footnote{Mathematical Institute, University of Oxford,
Oxford OX2 6GG, UK.}~\footnote{Research supported by EPSRC grant EP/V007327/1.} \;
Lawrence Hollom\footnote{Department of Pure Mathematics and Mathematical Statistics (DPMMS), University of Cambridge, Wilberforce Road, Cambridge CB3 0WA, UK.} \;
Julien Portier\protect\footnotemark[4] \;\\
Emil Powierski\protect\footnotemark[2] \;
Michael Savery\protect\footnotemark[2]~\footnote{Heilbronn Institute for Mathematical Research, Bristol, UK.} \;
Youri Tamitegama\protect\footnotemark[2]\;
Leo Versteegen\protect\footnotemark[4]
}
\begin{document}
\maketitle

\begin{abstract}
\noindent A graph is said to be \emph{interval colourable} if it admits a proper edge-colouring using palette $\mathbb{N}$ in which the set of colours incident to each vertex is an interval. The \emph{interval colouring thickness} of a graph $G$ is the minimum $k$ such that $G$ can be edge-decomposed into $k$ interval colourable graphs. We show that $\theta(n)$, the maximum interval colouring thickness of an $n$-vertex graph, satisfies $\theta(n) =\Omega(\log(n)/\log\log(n))$ and $\theta(n)\leq n^{5/6+o(1)}$, which improves on the trivial lower bound and an upper bound of the first author and Zheng. As a corollary, we answer a question of Asratian, Casselgren, and Petrosyan and disprove a conjecture of Borowiecka-Olszewska, Drgas-Burchardt, Javier-Nol, and Zuazua. We also confirm a conjecture of the first author that any interval colouring of an $n$-vertex planar graph uses at most $3n/2-2$ colours.

\blfootnote{Email: \textsf{maria.aksenovich@kit.edu,
\{girao,powierski,savery,tamitegama\}@maths.ox.ac.uk,\\\{lh569,jp899,lvv23\}@cam.ac.uk}}
\end{abstract}

\section{Introduction}\label{sec:intro}
We say that a graph $G$ is \emph{interval colourable} if it has an \emph{interval colouring} $c$, that is, a proper edge-colouring $c \colon E(G) \to \N$ such that the set of colours incident to each vertex $v \in V(G)$, $\{c(vw) \colon w \in N(v)\}$, consists of consecutive integers.  Since these notions were introduced by Asratian and Kamalian \cite{AK1} in 1987, interval colourable graphs and their properties have been studied extensively (see, for example, \cite{ACP, AK1, AK2, BDJZ, H, AZ, S}). Examples of interval colourable graphs include trees and regular class 1 graphs (and hence, in particular, regular bipartite graphs), while non-interval colourable graphs include odd cycles and complete graphs with an odd number of vertices (see~\cite{S} for a further, bipartite, example).

In the present paper, we principally study a parameter introduced recently by Asratian, Casselgren, and Petrosyan~\cite{ACP} to quantify how far a graph is from being interval colourable.
The \emph{interval colouring thickness} of a graph $G$, denoted $\theta(G)$, is the minimum number $k$ such that $G$ can be edge-decomposed into $k$ interval colourable subgraphs.
Interval colouring thickness, and interval colourings more generally, are of particular interest in theoretical computer science as they are related to scheduling tasks without waiting periods. For instance, suppose one wishes to schedule one on one meetings between certain attendees at a conference; let $G$ be the graph with the attendees as vertices, and edges corresponding to the desired meetings. If each meeting lasts the same amount of time and nobody is willing to wait between any of their meetings on a given day, then the interval thickness of $G$ corresponds to the minimum number of days over which the meetings must run.

 We write $\theta(n)$ for the maximum interval colouring thickness of a graph on $n$ vertices, and $\theta'(m)$ for the maximum interval colouring thickness of a graph with $m$ edges. Upper bounds on $\theta(G)$ in terms of the edge chromatic number of $G$ obtained by Asratian, Casselgren, and Petrosyan~\cite{ACP} imply that $\theta(n)\leq 2 \ceil{n/5}$, and the same authors observe that an arboricity result of Dean, Hutchinson, and Scheinerman \cite{DHS} gives $\theta'(m) \leq \ceil{\sqrt{m/2}}$.  The first author and Zheng~\cite{AZ} improved on the first of these bounds, showing that $\theta(n)$ is sublinear. In Section~\ref{sec:ub} we show that a result of R\"odl and Wysocka~\cite{RW} implies the following polynomial improvement.

\begin{theorem}\label{thm:ub}
We have  $\theta(n) \leq n^{5/6+o(1)}$ and  $\theta'(m) \leq m^{5/11+o(1)}$.
\end{theorem}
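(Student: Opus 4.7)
My plan is to iteratively peel off large interval-colourable subgraphs from $G$, using the cited theorem of R\"odl and Wysocka to guarantee that a substantial interval-colourable piece can always be found. The natural source of interval-colourable subgraphs here is the class of regular bipartite graphs: K\"onig's theorem furnishes a proper $\Delta$-edge-colouring of any $\Delta$-regular bipartite graph, and in a $\Delta$-regular graph such a colouring automatically uses the full palette $\{1,\ldots,\Delta\}$ at every vertex and is therefore an interval colouring. More generally, any regular class~1 graph is interval colourable.

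Given $G$ on $n$ vertices with $m$ edges, I would apply R\"odl--Wysocka to extract an interval-colourable subgraph $H_1$ with $e(H_1)\ge f(n,m)$ for an appropriate function $f$, set $G_1=G\setminus H_1$, and iterate. The number of iterations needed to exhaust $G$ directly upper-bounds $\theta(G)$. The exponents $5/6$ (in $n$) and $5/11$ (in $m$) should come out by careful tracking of this recurrence, most likely in two regimes: while $G_i$ is still reasonably dense one applies R\"odl--Wysocka repeatedly, and once the edge count of the residual graph drops below a suitable threshold one finishes off with the arboricity bound $\theta'(m)\le \ceil{\sqrt{m/2}}$ of Dean--Hutchinson--Scheinerman noted in the introduction. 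The threshold is then chosen to equate the two contributions, which should pin down the two different exponents for the vertex- and edge-parametrised statements.

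The main obstacle I anticipate is reconciling what R\"odl--Wysocka actually produces with what we need. If their theorem only supplies an \emph{almost}-regular, or almost-bipartite, subgraph, then a short extra argument will be required to extract from it a genuinely interval-colourable piece of essentially the same size: for instance, by paring off a few low-degree vertices to make the subgraph exactly regular, or by decomposing the almost-regular part into a bounded number of interval-colourable pieces and absorbing the factor into the $o(1)$ in the exponent. A secondary technical point is to check that the parameters of the iteration remain under control as $G_i$ shrinks, so that the size bound $f(n,m_i)$ from R\"odl--Wysocka does not degrade faster than the recurrence can tolerate. Once these two issues are handled, deriving the target exponents should reduce to a routine optimisation balancing the iterative bound against the arboricity bound.
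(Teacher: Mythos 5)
Your outline reproduces the warm-up argument that the paper itself presents and then explicitly discards as insufficient: peel off regular bipartite subgraphs via R\"odl--Wysocka while the graph is dense, then finish the sparse remainder with the arboricity bound $\theta'(m)=O(\sqrt{m})$. Carried out naively this gives $\theta(n)=O(n^{12/13})$, and even with the optimal density-adaptive choice of subgraph sizes (taking larger regular pieces while the residual graph is denser, which is the content of Lemma~\ref{lem:vtx}) the exponent you obtain from a terminal bound $\theta'(m)=O(m^{1/2})$ is $\frac{10\cdot(1/2)}{5+1/2}=\frac{10}{11}$, not $\frac{5}{6}$. The balancing you propose cannot be pushed further because the bottleneck is the quality of the edge-parametrised bound used on the sparse remainder, and no choice of threshold fixes that. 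Relatedly, your sketch gives no route to $\theta'(m)\leq m^{5/11+o(1)}$ at all: the arboricity bound is stuck at $m^{1/2}$.

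The missing idea is a \emph{mutual bootstrap} between the two parametrisations. The paper supplements the peeling argument with a second conversion (Lemma~\ref{lem:edg}): given $\theta(n)=O(n^{\alpha})$, one shows $\theta'(m)=O(m^{\beta})$ with $\beta=\frac{\alpha}{1+\alpha}$, by repeatedly removing spanning trees (which are interval colourable) from any connected graph that is too sparse for the vertex bound to bite, and applying the vertex bound otherwise. Feeding this improved $\theta'$ bound back into the regular-subgraph peeling (which gives $\alpha=\frac{10\beta}{5+\beta}$) and iterating, the exponents converge to the fixed points $\alpha\to 5/6$ and $\beta\to 5/11$. Without this back-and-forth your recurrence terminates one round early at a strictly worse exponent. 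Two smaller notes: your worry about R\"odl--Wysocka producing only almost-regular subgraphs is unfounded (the theorem yields a genuinely regular subgraph, and bipartiteness is arranged by first passing to a bipartite subgraph with at least half the edges before applying it); and forests enter the argument not only through the arboricity black box but as the explicit interval-colourable pieces peeled off in the edge-parametrised conversion, so they deserve a place alongside regular bipartite graphs in your decomposition.
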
 

As noted above, various graphs with $\theta(G) \geq 2$ are known, but to the best of our knowledge none with $\theta(G) \geq 3$ have been found.
In Section~\ref{sec:lb} we show that $\theta(n)=\omega(1)$.

\begin{theorem}\label{thm:lb}
We have $\theta(n)= \Omega\left(\frac{\log(n)}{\log\log(n)}\right)$, and consequently $\theta'(m)= \Omega\left(\frac{\log(m)}{\log\log(m)}\right)$.
\end{theorem}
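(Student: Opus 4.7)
The bound on $\theta'(m)$ will follow from the bound on $\theta(n)$, since an $n$-vertex graph has at most $\binom{n}{2}$ edges: a graph $G$ achieving $\theta(G)\geq c\log n/\log\log n$ has $m=|E(G)|\leq n^2$, whence $\theta(G)\geq \Omega(\log m/\log\log m)$ by monotonicity. I therefore focus on constructing, for every large $n$, an $n$-vertex graph with interval colouring thickness $\Omega(\log n/\log\log n)$.

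My approach is iterative. The base case is any small non-interval-colourable graph, for instance $G_2=K_3$: any proper edge-colouring of $K_3$ uses three distinct colours, so each vertex sees a two-element palette which fails to be an interval, giving $\theta(K_3)=2$. Starting from $G_2$, I would build a sequence $G_2,G_3,\ldots$ with $\theta(G_k)\geq k$ and $|V(G_k)|\leq k^{O(k)}$; then setting $k=\Theta(\log n/\log\log n)$ yields an $n$-vertex graph of the required thickness. The construction of $G_{k+1}$ from $G_k$ would proceed via a gadget-substitution scheme: fix a small graph $H$ with distinguished terminal vertices, substitute the vertices of a suitable base graph by copies of $H$, and glue many copies of $G_k$ through the gadgets in such a way that any decomposition of $G_{k+1}$ into $k$ interval-colourable subgraphs forces at least one part to contain a large piece of some copy of $G_k$ that is itself non-interval-colourable.

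The main obstacle I anticipate is the \emph{flexibility} of interval colourings: at each vertex the starting colour can be freely translated. Consequently, the obstruction to interval colourability that the gadget exploits must be combinatorial/structural rather than arithmetic. The principal tool I would use is the basic rigidity that for any edge $uv$ in an interval-coloured graph, the intervals of lengths $d(u)$ and $d(v)$ at its endpoints must intersect at the colour of $uv$. By carefully choosing the degrees at the terminals of the gadget (and ideally taking $H$ to be bipartite with a highly unbalanced degree sequence), I would aim to force a rigid coupling between the way the degrees at each terminal split among the $k$ decomposition parts; a pigeonhole argument should then single out a part receiving a dense enough sub-copy of $G_k$ to inherit its obstruction. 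The technical crux is guaranteeing this last step — namely that the non-interval-colourability of $G_k$ is \emph{robust} under edge-deletion, so that large subgraphs still witness $\theta\geq k$. Choosing $G_k$ (and correspondingly the gadget $H$) to enjoy this robustness property, and propagating it through the induction without blowing up $|V(G_k)|$ faster than $k^{O(k)}$, is the hardest part of the plan.
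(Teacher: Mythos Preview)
Your plan has a genuine gap at precisely the point you yourself flag: the ``robustness under edge-deletion'' step is not a technicality to be filled in later but the entire difficulty of the problem. Pigeonhole over a $k$-part edge-decomposition only guarantees that some part receives a $1/k$ fraction of the edges of each embedded copy of $G_k$; there is no reason such a fraction should inherit $\theta\geq k$, or even fail to be interval colourable --- your own base case $K_3$ becomes interval colourable after deleting a single edge. For the induction to go through you would need, at every stage, a graph $G_k$ with the property that \emph{every} subgraph retaining a $1/k$ proportion of its edges still has $\theta\geq k$, and you supply no mechanism for producing such graphs while keeping $\abs{V(G_k)}\leq k^{O(k)}$. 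Nothing in the gadget-substitution outline (the unspecified $H$, the unspecified gluing, the hoped-for ``rigid coupling'' of degree splits) addresses this; as written, the proposal is a restatement of the goal rather than a route to it.

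The paper's proof is structurally different and sidesteps this issue entirely. It does not build an inductive family but constructs a single bipartite graph $G$ as the union of $t\approx\log n$ random bipartite layers $G_i$ on a common part $B$ of size $n$, with the degrees on the other side decreasing geometrically across the layers ($\alpha_i=2^{-i}$). The obstruction invoked is not ``some subgraph is non-interval-colourable'' but Sevastianov's quantitative observation: in any interval-coloured graph, if every pair of vertices in a set $U$ is joined by a path of total degree at most $d$, then no vertex has more than $d$ neighbours in $U$. A pseudorandomness lemma (proved by a Chernoff/union-bound argument) ensures that any moderately dense subgraph of a layer $G_i$ contains a diameter-$6$ piece with a large footprint $B_i\subseteq B$. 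If $G$ decomposes into $\ell$ interval-colourable parts, pigeonhole gives one part $H$ whose total normalised density across the layers is at least $t/\ell$; applying the observation \emph{directly to $H$} --- with no robustness hypothesis needed --- forces the sets $B_i$ at well-separated degree scales to be nearly pairwise disjoint inside $B$, and inclusion--exclusion then overcounts $\abs{B}$ once $t/\ell\gtrsim\log\log n$. The point is that the paper's obstruction is a neighbourhood-size inequality valid for \emph{any} interval-colourable $H$, so the argument never has to control what happens when edges are deleted from a fixed witness.
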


The first bound in Theorem~\ref{thm:lb} is proved via a random construction, which, in fact, produces a bipartite graph. The bound on $\theta'(m)$ follows trivially (see Section~\ref{sec:lb}). One corollary of this theorem is that for every integer $k$ there exists a graph with interval colouring thickness~$k$, answering a question of Asratian, Casselgren, and Petrosyan~\cite{ACP}. Indeed, given a graph $G$ of some interval colouring thickness $K$, and an edge-decomposition of $G$ into $K$ interval colourable subgraphs, the union of any $k\leq K$ of these subgraphs has interval colouring thickness $k$.

Theorem~\ref{thm:lb} also disproves a conjecture of Borowiecka-Olszewska, Drgas-Burchardt, Javier-Nol, and Zuazua \cite{BDJZ}, who defined an oriented graph to be \emph{consecutively colourable} if it has an arc colouring using palette $\N$ such that for each vertex $v$, the colours of the out-arcs from $v$ are all different and form an interval, and similarly for the in-arcs to $v$.
They conjectured that for every graph $G$, there exists an orientation of its edges that is consecutively colourable. It is easy to see that this conjecture implies that $\theta(G) \leq 2$ for every bipartite graph $G$, and hence is false by our construction.

In the last part of this paper, rather than studying the minimum number of interval colourable graphs required to edge-decompose a graph, we are instead interested in the maximum number of colours which can be used in an interval colouring of a given interval colourable graph. 
To this end, for each interval colourable graph $G$ we define $t(G)$ to be the greatest number of colours used in an interval colouring of $G$. This parameter was introduced by Asratian and Kamalian~\cite{AK1} who proved that $t(G) \leq \abs{V(G)}-1$ if $\abs{V(G)}\geq 1$ and $G$ contains no triangles and later~\cite{AK2} that $t(G) \leq 2\abs{V(G)}-1$ for all graphs $G$ with $\abs{V(G)}\geq 1$. This was improved by Giaro, Kubale, and Ma\l afiejski~\cite{GKM} to $t(G) \leq 2\abs{V(G)}-4$ for all graphs with at least three vertices. The first author~\cite{Axenovich2002} improved this bound over the class of planar graphs to $t(G) \leq (11/6)\abs{V(G)}$ and conjectured that this could be further improved to $t(G) \leq (3/2)\abs{V(G)}$. We confirm this conjecture.

\begin{theorem}
\label{thm:3/2result}
Let $G$ be a planar graph on $n\geq 2$ vertices that admits an interval colouring. Then $t(G)\leq (3/2)n-2$.
\end{theorem}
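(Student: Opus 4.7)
The plan is to fix an interval colouring $c\colon E(G)\to \{1,\dots,t\}$ using all $t$ colours, and for each vertex $v$ write $[a_v, b_v]$ for its colour interval, so that $b_v - a_v + 1 = d(v)$. Euler's formula gives $\sum_v d(v) = 2|E(G)| \leq 6n - 12$. For each colour $i$, the set $S_i = \{v : a_v \leq i \leq b_v\}$ contains the two endpoints of any edge of colour~$i$, so $|S_i|\geq 2$. Since $\sum_i |S_i| = \sum_v d(v)$, one immediately obtains the weak bound $2t \leq 6n-12$, i.e.\ $t \leq 3n-6$.

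To sharpen this to $3n/2-2$, I would refine the analysis of \emph{thin} colours~$i$ with $|S_i|=2$, i.e.\ used by a single edge. The key structural observation is that if $i$ and $i+1$ are both thin then the two unique edges either share an endpoint, or are vertex-disjoint; in the latter case every vertex $v$ satisfies $b_v \leq i$ or $a_v \geq i+1$, so $V(G)$ splits as $\{v : b_v\leq i\}\sqcup\{v : a_v\geq i+1\}$ with no cross-edge (such an edge would require a colour in $[a_u,b_u]\cap[a_v,b_v]=\emptyset$), contradicting connectedness (we reduce to connected $G$ by handling components separately). Hence thin colours organise into chains of consecutively-sharing edges, and a maximal run of $\ell$ thin colours is realised by a walk in $G$ on $\ell+1$ vertices. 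Letting $T$ be the total number of thin colours, one obtains $\sum_i |S_i| \geq 4t - 2T$, which combined with $\sum_i |S_i| \leq 6n-12$ yields
\[ t \leq \frac{3n}{2} - 3 + \frac{T}{2}, \]
so the target bound reduces to controlling $T$ via the planar structure plus a boundary correction.

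The main obstacle is the sharp constant. The thin-chain structural lemma alone yields only $T = O(n)$ (via the walk description together with a rough vertex count), leading to $t = O(n)$ with a suboptimal constant. To reach $\tfrac{3n}{2} - 2$ requires either a face-counting argument via the planar embedding (using $|F|\leq 2n-4$ for triangulations and a careful association of faces to colours, exploiting that each edge bounds two faces while each face contains three edges of distinct colours), or a discharging argument that uses the Euler slack $\sum_v (6 - d(v)) \geq 12$ to boost the non-thin contribution. The extra $-2$ in the statement should emerge from the endpoints---colours $1$ and $t$, whose anchoring vertices have $a_v=1$ or $b_v=t$---and is consistent with the tight case $G = K_4$, where $n = 4$ and $t = 4 = \tfrac{3n}{2} - 2$.
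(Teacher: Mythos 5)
Your proposal does not reach the stated bound, and the missing step is the entire difficulty. The counting you set up is correct as far as it goes: with $S_i$ the set of endpoints of colour-$i$ edges you get $\sum_i\abs{S_i}=\sum_v d(v)\leq 6n-12$ and, writing $T$ for the number of colours used by a single edge, $\sum_i\abs{S_i}\geq 4t-2T$, hence $t\leq 3n/2-3+T/2$. But this only yields the theorem if $T\leq 2$, and $T$ is typically linear in $n$: for a path all $n-1$ colours are thin, and even in the extremal planar examples of Figure~\ref{fig:maximal-example-labelled} roughly every third colour is thin, so $T\approx n/2$ and your inequality gives only about $7n/4$. So no bound on $T$ alone can close the argument; the slack has to come from the fact that a unique-colour edge \emph{separates} the graph, which is exactly the structural observation you make about consecutive thin colours but then do not exploit. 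The face-counting and discharging routes you gesture at are not carried out, so as written this is a genuine gap rather than a complete proof.

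The paper's proof runs entirely through that separation idea, with no global counting. Take a minimal counterexample $G$ coloured with colours $1,\dots,t$. If $t>(3/2)n-2$ then $2t-2>3(n-2)\geq e(G)$, so some colour $c$ with $1<c<t$ lies on a unique edge $vw$. Every other vertex sees only colours $<c$ or only colours $>c$ (its palette is an interval missing $c$), so $V(G)\setminus\{v,w\}$ splits into $V_1$ and $V_2$ with no edges between them, and the two induced graphs $G_i=G[V_i\cup\{v,w\}]$ are each interval coloured, satisfy $\abs{V(G_1)}+\abs{V(G_2)}=n+2$ and $t(G_1)+t(G_2)\geq t+1$, and are smaller; minimality then gives $t\leq (3/2)(n+2)+2(1-3)-1=(3/2)n-2$, a contradiction. (The paper in fact proves this for any hereditary density bound $e(H)\leq k(\abs{V(H)}-2)$, giving $t\leq (k/2)n+1-k$.) If you want to salvage your approach, the natural move is to turn your observation about vertex-disjoint consecutive thin edges into exactly this recursion at a single thin internal colour, rather than trying to bound $T$ globally.
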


Theorem~\ref{thm:3/2result} was shown to be tight in~\cite{Axenovich2002}. In Section~\ref{sec:planar}, where the theorem is proved, we recall the constructions demonstrating this fact and extend them to a larger collection of graphs.

\textbf{Note.}
This paper supersedes two independent works by subsets of the present authors which appeared on arXiv almost simultaneously~\cite{AGPST,HPV}. Shortly after this, an independent proof of Theorem~\ref{thm:3/2result} was announced by Arsen Hambardzumyan and Levon Muradyan~\cite{Ham}. 

\section{Interval colouring thickness lower bound}\label{sec:lb}

In order to provide a lower bound on the interval colouring thickness of the graph $G$ that we construct below, we will need to show that every edge-decomposition of $G$ into sufficiently few parts has a part which is not interval colourable. Our approach to disproving interval colourability is based on the following observation, noted by Sevastianov \cite{S}.  
\begin{observation}\label{obs:low_deg_path}
Let $G$ be an interval colourable graph and let $U \subseteq V(G)$. Suppose that there exists $d\in \N$ such that for all distinct $v,w \in U$ there is a path $P$ in $G$ from $v$ to $w$ such that $\sum_{x\in V(P)} d(x)\leq d.$ Then for all $u \in V(G)$, we have $\abs{N(u) \cap U}\leq d$.
\end{observation}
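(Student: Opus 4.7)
The plan is to fix an interval colouring $c$ of $G$ and, for each vertex $x$, write $S(x) \defined \set{c(xy) : y \in N(x)}$ for the \emph{spectrum} of $x$, which by hypothesis is an interval of exactly $d(x)$ consecutive integers. Fix any $u \in V(G)$; the goal is to show that the set $C \defined \set{c(uv) : v \in N(u) \cap U}$ has at most $d$ elements. Since the colours incident to $u$ are pairwise distinct, it suffices to bound the diameter $\max C - \min C$ of $C$ from above.

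The key ingredient is the elementary fact that if $e$ and $e'$ are two edges both incident to a common vertex $x$, then both their colours lie in $S(x)$ and hence $\abs{c(e) - c(e')} \leq d(x) - 1$. Applying this repeatedly along any path $P = v = x_0, x_1, \ldots, x_k = w$ in $G$, and using the triangle inequality on the telescoping sum $c(uv) \to c(x_0 x_1) \to c(x_1 x_2) \to \cdots \to c(x_{k-1} x_k) \to c(uw)$, I obtain
\[
\abs{c(uv) - c(uw)} \leq \bigl(d(x_0) - 1\bigr) + \sum_{i=1}^{k-1} \bigl(d(x_i) - 1\bigr) + \bigl(d(x_k) - 1\bigr) = \sum_{x \in V(P)} d(x) - \abs{V(P)}.
\]
For any two distinct $v, w \in N(u) \cap U$, applying this to the path $P$ guaranteed by the hypothesis and noting that $\abs{V(P)} \geq 2$ yields $\abs{c(uv) - c(uw)} \leq d - 2$. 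As this bound holds for every pair in $C$, the set $C$ is contained in an interval of length at most $d - 1$, and therefore $\abs{C} \leq d - 1 \leq d$, as required.

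I do not foresee any substantive obstacle, since the argument is essentially a one-line telescoping calculation. The only mildly delicate point is that $u$ itself could lie on $P$, or that the edges $uv$ or $uw$ could coincide with the first or last edge of $P$ (for instance when $u \in N(v) \cap N(w)$ is taken as an internal vertex of the path); however, because the inequality $\abs{c(e) - c(e')} \leq d(x) - 1$ holds for any pair of edges at $x$ (trivially when $e = e'$), the telescoping remains valid in all such cases and no case distinction is actually required.
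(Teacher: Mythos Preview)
Your proof is correct and follows essentially the same telescoping argument as the paper's: both use that any two edges at a vertex $x$ have colours within $d(x)-1$ of each other, and sum this along the guaranteed low-degree path to bound $\abs{c(uv)-c(uw)}$. You even extract the slightly sharper bound $\abs{N(u)\cap U}\leq d-1$ (valid whenever $\abs{N(u)\cap U}\geq 2$) by recording that $\abs{V(P)}\geq 2$, whereas the paper only uses $\abs{V(P)}\geq 1$; this makes no difference for the stated observation.
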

\begin{proof}
    If $\abs{U}\leq 1$ then the lemma holds trivially, so assume $\abs{U}\geq 2$. Let $c$ be an interval colouring of $G$, let $u\in V(G)$, and let $v,w\in N(u)\cap U$ be distinct. Fix a path $P=x_1\dots x_k$ in $G$ from $v=x_1$ to $w=x_k$ such that $\sum_{i=1}^k d(x_i)\leq d$. The colours under $c$ of any two edges incident to a vertex $x$ of $G$ differ by at most $d(x)-1$, so it follows from the existence of $P$ that any edge incident to $v$ and any edge incident to $w$ have colours differing by at most $\sum_{i=1}^k (d(x_i)-1)\leq d-1$. Observe that we can choose $v$ and $w$ such that $c(uv)-c(uw)\geq \abs{N(u) \cap U}-1$ which gives $\abs{N(u) \cap U} \leq d$.
\end{proof}

We shall construct a bipartite graph of large interval colouring thickness by patching together many bipartite graphs of the form given by the following lemma.

\begin{lemma}\label{lem:low_diam}
Fix $\alpha\in(0,1/2]$ and let $a$ and $n$ be integers satisfying $n\geq \max\{1000(\log(a)+1)/\alpha, a+1\}$. Then there is a bipartite graph $G$ on parts $A$ and $B$ of sizes $a$ and $n$ respectively satisfying the following.
\begin{enumerate}[label=(\alph*), font=\textup]
    \item For all $x \in A$, $ d(x)=\floor{\alpha n}$.  \label{it:degrees}
    \item For each $\delta \in (0,1]$ with $\delta \geq 10 a^{-1/3}\alpha^{-1}$, if $H$ is a subgraph of $G$ with at least $\alpha\delta a n$ edges, then there exist $A' \subseteq A$ and $B' \subseteq B$ with $\abs{A'} \leq 1/\alpha$ and $\abs{B'} \geq  \delta n/16$ such that the induced subgraph $H[A' \cup B']$ has diameter at most $6$. \label{it:diam}
\end{enumerate}
\end{lemma}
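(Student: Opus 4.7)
Plan:

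The plan is to take $G$ to be a random bipartite graph in which, independently for each $x \in A$, the neighborhood $N_G(x)$ is a uniformly chosen subset of $B$ of size $\lfloor \alpha n \rfloor$; then property~(a) is automatic. To establish~(b), I would first show via Chernoff bounds on the hypergeometric distribution plus a union bound over $B$ (for which the hypothesis $n \geq 1000(\log a + 1)/\alpha$ is tailored) that with probability at least $1/2$, every $b \in B$ satisfies the pseudorandom bound $|N_G(b)| \leq 2\alpha a$. I then verify~(b) deterministically for any such $G$.

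Given $H \subseteq G$ with $|E(H)| \geq \alpha\delta an$, I would use a ``hub'' construction: pick $b^\ast \in B$, set $A^\ast := N_H(b^\ast)$, take $A' \subseteq A^\ast$ uniformly at random of size $s := \lceil 1/\alpha\rceil$, and set $B' := \{b^\ast\} \cup \bigcup_{a \in A'} N_H(a)$. Such $(A', B')$ automatically satisfy $|A'| \leq 1/\alpha$, and $H[A' \cup B']$ has diameter at most $4$, since every pair of vertices in $A' \cup B'$ is connected via $b^\ast$ in at most two hops. To locate $b^\ast$, I would apply Cauchy--Schwarz to $\sum_{b}\sum_{a \in N_H(b)} d_H(a) = \sum_a d_H(a)^2 \geq (\sum_a d_H(a))^2/a \geq \alpha^2\delta^2 an^2$, extracting $b^\ast$ with $\sum_{a \in A^\ast} d_H(a) \geq \alpha^2\delta^2 an$, and hence (using $d_H(a) \leq \alpha n$) $|A^\ast| \geq \alpha \delta^2 a \geq s$ under the hypothesis $\delta\alpha \geq 10 a^{-1/3}$.

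The remaining task is to show $\mathbb{E}|B'| \geq \delta n /16$, which guarantees the existence of an outcome meeting the size constraint on $B'$. A standard indicator calculation gives, for each $b \in B$,
\[
\Pr[b \in B'] \;\geq\; 1 - \bigl(1 - |N_H(b) \cap A^\ast|/|A^\ast|\bigr)^{s} \;\geq\; \min\!\bigl(\tfrac12,\ s\,|N_H(b)\cap A^\ast|/(2|A^\ast|)\bigr),
\]
after which one splits the sum over $B$ by a threshold on $|N_H(b) \cap A^\ast|$: the above-threshold terms contribute directly, while the below-threshold terms contribute via the total $\sum_b |N_H(b)\cap A^\ast| = \sum_{a\in A^\ast} d_H(a) \geq \alpha^2\delta^2 an$ combined with the pseudorandom upper bound $|A^\ast| \leq |N_G(b^\ast)| \leq 2\alpha a$.

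The main obstacle I anticipate is the delicate balancing act in this case analysis: both above- and below-threshold contributions must clear $\delta n/16$ simultaneously, and the slack is tight in the extremal regime $\delta\alpha = \Theta(a^{-1/3})$. The cube-root exponent in the hypothesis $\delta \geq 10 a^{-1/3}/\alpha$ reflects the need for $\alpha^2 \delta^2 a$ to dominate the constants arising from the bookkeeping, and the pseudorandom upper bound on $|A^\ast|$ -- the only place the randomness of $G$ is truly used -- is crucial for coupling the Cauchy--Schwarz lower bound on $\sum_{a\in A^\ast} d_H(a)$ to a usable lower bound on $\mathbb{E}|B'|$.
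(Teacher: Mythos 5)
There is a genuine quantitative gap at the core of your argument: the hub construction can only deliver $\abs{B'} = \Omega(\delta^2 n)$, not the required $\Omega(\delta n)$. Concretely, Cauchy--Schwarz guarantees only $\sum_{a\in A^\ast} d_H(a) \geq \alpha^2\delta^2 an$, so even in the most favourable case (all terms below threshold, neighbourhoods essentially disjoint) your expectation bound gives
\[
\mathbb{E}\abs{B'} \;\geq\; \frac{s}{2\abs{A^\ast}}\sum_{b\in B}\abs{N_H(b)\cap A^\ast} \;\geq\; \frac{\ceil{1/\alpha}\cdot \alpha^2\delta^2 an}{2\cdot 2\alpha a} \;=\; \frac{\delta^2 n}{4},
\]
and the above-threshold terms, being capped at $1/2$ each, can only make this smaller. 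This is not a matter of constants: your choice of $b^\ast$ is consistent with every vertex of $A^\ast$ having $d_H(a)=\alpha\delta^2 n/2$, in which case any $\ceil{1/\alpha}$ of them cover at most about $\delta^2 n/2$ vertices of $B$, so no refinement of the bookkeeping recovers a factor of $\delta$. Since $\delta^2 n/4 < \delta n/16$ for $\delta<1/4$, the lemma is not established; moreover the weaker conclusion cannot be substituted in the proof of Theorem~\ref{thm:lb}, which crucially uses $\sum_i \abs{B_i}\geq \sum_i\delta_i n/16$ with $\sum_i\delta_i\geq 19$ while $\sum_i\delta_i^2$ may be $o(1)$ there.

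A second, independent problem is that the pseudorandomness you invoke --- $\abs{N_G(b)}\leq 2\alpha a$ for \emph{every} $b\in B$ --- does not follow from the hypotheses: a union bound over $B$ needs $\alpha a$ to exceed a multiple of $\log n$, whereas the hypothesis $n\geq 1000(\log(a)+1)/\alpha$ controls $\alpha n$ against $\log a$ (it is tailored to degrees in $A$, not $B$). The hypotheses permit $a$ and $\alpha<1/2$ to be fixed, with $10a^{-1/3}\alpha^{-1}\leq 1$ so that~\ref{it:diam} is non-vacuous, while $n\to\infty$; then a positive proportion of $B$ violates your degree bound, and since $b^\ast$ depends on the adversarial $H$ you cannot restrict attention to a prescribed good vertex. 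The paper sidesteps both issues with a different extraction: greedily choose $s_1,\dotsc,s_k\in A$ with pairwise disjoint $S_i\subseteq N_H(s_i)$ of size $\ceil{\alpha\delta n/4}$; maximality forces a high-$H$-degree vertex $z$ to send at least $\alpha\delta n/4$ of its neighbours into $\bigcup_i S_i$, and a pseudorandomness property quantified over all subsets $U\subseteq B$ (for each $U$, few $y\in A$ have $\abs{N_G(y)\cap U}>2\alpha\abs{U}$, provable using only $n\geq a+1$) forces these neighbours to spread over at least $1/(4\alpha)$ of the $S_i$. Taking $B'$ to be the union of $\ceil{1/(4\alpha)}$ such $S_i$, each contributing a full $\alpha\delta n/4$ vertices, yields $\abs{B'}\geq\delta n/16$ with only a single factor of $\delta$.
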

\begin{proof}
Let $A$ and $B$ be vertex sets of the desired sizes and consider the random bipartite graph between them in which every edge appears independently with probability $4\alpha/3$. Drawing on $n\geq 1000(\log(a)+1)/\alpha$, a standard application of a Chernoff bound and a union bound over all vertices $x \in A$ yields that with failure probability at most $1/e$ we have $d(x) \geq \alpha n$ for all $x \in A$. 

For a fixed set $U\subseteq B$ and $y\in A$, let $E_{U,y}$ be the event that $|U\cap N(y)|>2\alpha |U|$. We will say that property~$(\star)$ holds in this random graph if for every $U\subseteq B$ there are at most $m \coloneqq 24n/\alpha |U|$ vertices $y\in A$ for which $E_{U,y}$ occurs.

\begin{claim}
Property~$(\star)$ holds with failure probability at most $1/e$.
\end{claim}
\begin{proof}
Fix $U\subseteq B$. For each $y\in A$, a standard application of a Chernoff bound yields that $E_{U,y}$ occurs with probability at most $e^{-\frac{1}{12}\alpha|U|}$. Since these events are independent, for any set $W\subseteq A$ the event $\bigcap_{y\in W} E_{U,y}$ occurs with probability at most $e^{-\frac{1}{12}\alpha |U||W|}$.
Taking a union bound over all such subsets $W$ with $|W|\geq m$, we find that with failure probability at most $2^ae^{-2n}$ there are at most $m$ vertices $y\in A$ for which $E_{U,y}$ occurs.
The claim follows from a second union bound over all $U\subseteq B$, using that $n\geq a+1$.
\end{proof}

Thus, there is a positive probability that in this random graph we have both $d(x) \geq \alpha n$ for all $x \in A$, and property ($\star$). Given an outcome with these properties, we can delete edges where necessary to obtain a graph $G$ satisfying~\ref{it:degrees} and~($\star$). We will now show that $G$ also satisfies~\ref{it:diam}.

Let $\delta \in (0,1]$ with $\delta \geq 10 a^{-\frac{1}{3}}\alpha^{-1}$ and fix a subgraph $H$ of $G$ with at least $\alpha\delta a n$ edges. Let $\{s_1,\ldots, s_k\}\subseteq A$ be a maximal set with the property that there is a collection of pairwise  disjoint sets $\{S_1,\ldots, S_k\}$ such that $S_i\subseteq N_{H}(s_i)$ and $\abs{S_i}=\ceil{\alpha\delta n/4}$ for each $i \in [k]$. Note that $k \leq 4/(\alpha\delta)$ (indeed, otherwise $n=\abs{B}\geq \sum_i \abs{S_i} >n$). By property ($\star$), for each $i\in[k]$ there are at most $m=24n/(\alpha |S_i|)$ vertices $y\in A$ with $\abs{S_i\cap N_G(y)}> 2\alpha\abs{S_i}$. Since the degree of each vertex in $A$ is at most $\floor{\alpha n}$, these vertices are incident to at most $km\floor{\alpha n}$ edges in $H$. It follows, using the fact that $a\geq 1000/(\alpha^3\delta^3)$, that there exists a vertex $z\in A$ in $H$ which has $\abs{S_i\cap N_G(z)}\leq 2\alpha\abs{S_i}$ for all $i\in [k]$ with
\[
d_H(z)\geq \frac{e(H)-km\floor{\alpha n}}{a} \geq \frac{\alpha \delta a n-4\alpha^{-1}\delta^{-1}\cdot 96\alpha^{-2}\delta^{-1}\alpha n}{a}=\alpha\delta n - \frac{384n}{\alpha^2\delta^2a} \geq \frac{\alpha\delta n}{2}.
\]

Let $S= \bigcup S_i$. If $z\in \{s_1,\dotsc,s_k\}$, then clearly $|N_{H}(z)\cap S|\geq \alpha\delta n/4$.
Otherwise, if $z\in A\setminus \{s_1,\dotsc,s_k\}$, yet $\abs{N_{H}(z)\cap (B\setminus S)}\geq  \alpha\delta n/4 $, then we could add $z$ to $\{s_1,\dotsc, s_k\}$, contradicting the maximality of that set. Hence, in either case $\abs{N_{H}(z)\cap S}\geq \alpha\delta n/4$.
The number of the $S_i'$ that have a non-empty intersection with $N_H(z)$ is at least $|N_{H}(z)\cap S|/ \max \{|N_H(z)\cap S_i|: i=1, \ldots, k\}$, which is thus
at least $\frac{\alpha\delta n/4}{2\alpha\ceil{\alpha\delta n/4}} \geq \frac{1}{4\alpha}$.

Denote by $B'$ the union of the first $\ceil{1/(4\alpha)}$ of the $S_i$ which intersect $N_{H}(z)$ and let $A'$ be the set containing $z$ and the corresponding $s_i$.
By construction, $H[A'\cup B']$ has diameter at most $6$ and we have $\abs{A'}= \ceil{1/(4\alpha)}+1 \leq 1/\alpha$ 
and $\abs{B'}\geq \ceil{1/(4\alpha)} \cdot \alpha\delta  n/4 \geq \delta n/16$, as required.
\end{proof}

To obtain Theorem~\ref{thm:lb}, we will construct a graph $G$ that is a union of $t \approx \log(n)$ bipartite graphs $G_i$ with bipartitions $(A_i, B)$ such that $|A_i|= \sqrt{n}$ and $|B|=n$, and where the degrees of the vertices in $A_{i-1}$ are twice those of the vertices in $A_i$. The graphs $G_i$ are obtained by $t$ separate applications of Lemma~\ref{lem:low_diam} so that each of them satisfies~\ref{it:diam} and we suppose towards a contradiction that $G$ can be edge-partitioned into $\ell \approx \log(n)/\log\log(n)$ interval colourable subgraphs. One of these subgraphs $H$ has $\sum_{i} e(H \cap G_i)/e(G_i)\geq t'/\ell\approx \log\log(n)$. Letting $\delta_i\coloneqq e(H\cap G_i)/e(G_i)$, we restrict our attention to those $i\in [t]$ for which $\delta_i$ is large enough to invoke property~\ref{it:diam} for $H\cap G_i$, noting that this does not result in a significant loss of total edge density.

We now apply property~\ref{it:diam} for each remaining $i$ to find a small diameter subgraph $G'_i$ of $H \cap G_i$ on $A_i' \cup B_i$ where $|A_i'|$ is small and $|B_i|\gtrsim \delta_in$. Restricting the indices under consideration once more to some set $I\subseteq [t]$, we can achieve that for $i<j\in I$, the degrees in $A'_i$ are much bigger than those in $A'_{j}$, while maintaining that $\sum_{i} \delta_i\geq C$ for some constant $C$. We can now use Observation~\ref{obs:low_deg_path} to show that $\abs{B_i\cap B_j}$ is small for all $i\neq j$. Combined with the fact that the sum of the sizes of the $B_i$ is much larger than $n$, this gives the required contradiction.

\begin{proof}[Proof of Theorem~\ref{thm:lb}]
We may assume that $n$ is large, so in what follows we will not concern ourselves with whether expressions are integers. Let $\ell=\log(n)/(280\log\log(n))$. To prove the theorem we will construct a graph $G$ on at most $2n$ vertices which we will show has interval colouring thickness greater than $\ell$. Let $t=\log(n)/7$ and let $B$ be a set of $n$ vertices. For each $i \in [t]$, let $A_i$ be a set of $a =\sqrt{n}$ vertices and let $G_i$ be a bipartite graph with bipartition $(A_i,B)$ of the form given by Lemma~\ref{lem:low_diam}, where $\alpha$ is taken to be $\alpha_i=2^{-i}$. To see that the assumption on $n$ in the statement of 
the lemma is satisfied in this setting, note that $n^{-1/7}\leq \alpha_i \leq 1/2$ and that since $n$ is large, $n\geq a+1$ and
\[
\frac{1000(\log(a)+1)}{\alpha_i}\leq 1000\cdot n^{1/7}\cdot\left(\frac{\log(n)}{2}+1\right)\leq n.
\]

Let $G$ be the union of the $G_i$, so that $G$ is a bipartite graph with bipartition $(\bigcup_i A_i,B)$. Note that $|V(G)|= n+ t\sqrt{n} = n + \log(n) \sqrt{n}/7 \leq 2n$, and suppose towards a contradiction that $G$ has an edge-decomposition into $\ell$ interval colourable subgraphs.
For each subgraph $H$ in such a decomposition, and each $i\in [t]$, let $\delta_i(H)\coloneqq\abs{E(H)\cap E(G_i)}/e(G_i)$. Note that
\[
\sum_{H} \sum_{i\in [t]} \delta_i(H)= t,
\]
so by the pigeonhole principle there is a subgraph $H$ in the decomposition such that $\sum_{i\in [t]} \delta_i(H)\geq t/\ell\geq 40\log\log(n)$.

By another application of the pigeonhole principle we can find $I'\subseteq [t]$ such that $i + 2\log\log(n) \leq j$ for all $i<j$ in $I'$ and $\sum_{i\in I'} \delta_i(H) \geq 20$. Now form $I\subseteq I'$ by deleting any $i$ for which $\delta_i(H)<1/\log(n)$, and note that since $t\leq \log(n)$, we still have $\sum_{i\in I} \delta_i(H) \geq 19$. For each $i\in I$ we have $\delta_i(H)\geq 1/\log(n)\geq 10\cdot n^{-1/6}\cdot n^{1/7} \geq 10a^{-1/3}\alpha_i^{-1}$, so by the construction of $G_i$ there exist sets $A'_i\subseteq A_i$ and $B_i\subseteq B$ such that $\abs{A'_i}\leq \alpha_i^{-1}$, $\abs{B_i}\geq \delta_i(H)n/16$, and $H[A'_i\cup B_i]$ has diameter at most 6. Next, we show that the intersection of any two of the sets $B_i$ is small.

\begin{claim}
    For each pair $j<i$ in $I$, we have $\abs{B_i \cap B_j}\leq 6\alpha_i\alpha_j^{-1}n$.
\end{claim}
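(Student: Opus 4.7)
The plan is to apply Observation \ref{obs:low_deg_path} to the interval colourable graph $H$ with $U = B_i \cap B_j$, obtaining $|N_H(u) \cap U| \leq d$ for every $u \in V(H)$, and then to sum this bound over $u \in A'_j$. Because $H[A'_j \cup B_j]$ is bipartite with diameter at most $6$, every $v \in B_j \supseteq U$ has at least one neighbour in $A'_j$ inside $H$ (take the first edge of any short path from $v$ to any $A'_j$-vertex), so $U \subseteq \bigcup_{u \in A'_j} N_H(u)$ and hence
\[
|U| \leq \sum_{u \in A'_j} |N_H(u) \cap U| \leq |A'_j|\cdot d \leq \alpha_j^{-1} d.
\]
To match the target bound $6\alpha_i \alpha_j^{-1} n$, I aim to take $d = 6\alpha_i n$.

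To verify the hypothesis of the observation with this $d$, I use for each pair of distinct $v,w \in U$ that both lie in $B_i$, so the diameter-$6$ structure on $H[A'_i \cup B_i]$ supplies a path $P$ in $H$ of length at most $6$ between them. By bipartiteness $P$ visits at most three vertices of $A'_i$, each with $H$-degree at most $\floor{\alpha_i n}$ (since the vertices of $A_i$ only appear as an endpoint of edges of $G_i$); these together contribute at most $3\alpha_i n$ to $\sum_{x\in V(P)} d_H(x)$.

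The main obstacle is controlling the contribution of the (up to four) $B$-vertices on $P$, including the endpoints $v$ and $w$, whose $H$-degrees are a priori uncontrolled. To handle this I would strengthen the random construction in Lemma \ref{lem:low_diam} by a routine Chernoff-plus-union-bound argument to ensure, in addition to properties \ref{it:degrees} and \ref{it:diam}, that every $v\in B$ satisfies $d_{G_k}(v) \leq 2\alpha_k\sqrt{n}$ for each $k$. Summing over $k \in [t]$ (using $\sum_k \alpha_k \leq 2$) then gives $d_H(v) \leq d_G(v) \leq 4\sqrt{n}$ for every $v \in B$, so the four $B$-vertices on $P$ contribute at most $16\sqrt{n}$. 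Since $\alpha_i \geq 2^{-t}$ is much larger than $n^{-1/2}$, the total degree-sum along $P$ is at most $3\alpha_i n + 16\sqrt{n} \leq 6\alpha_i n$, as needed, and the claim follows.
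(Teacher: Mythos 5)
Your proof is correct and follows essentially the same route as the paper: apply Observation~\ref{obs:low_deg_path} to $H$ using the diameter-$6$ structure of $H[A'_i\cup B_i]$, then sum $\abs{N_H(u)\cap B_i}$ over $u\in A'_j$ using $B_j\subseteq N_H(A'_j)$. The only real difference is that what you call ``the main obstacle'' --- controlling $d_H$ on the $B$-side of the path --- requires no strengthening of Lemma~\ref{lem:low_diam}: deterministically $d_H(y)\leq d_G(y)\leq t\sqrt{n}=\log(n)\sqrt{n}/7\leq\alpha_i n$ for every $y\in B$, since each of the $t$ parts $A_k$ has only $\sqrt{n}$ vertices and $\alpha_i\geq n^{-1/7}$, which is exactly the bound the paper uses.
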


\begin{proof}
    We will apply Observation~\ref{obs:low_deg_path} to the interval colourable graph $H$, where $U$ is taken to be $A'_i\cup B_i$. For each $x\in A'_i$ we have $d_H(x)\leq \alpha_i n$, for each $y\in B_i$ we have $d_H(y)\leq t\sqrt{n} \leq \alpha_i n$, and $H[A'_i\cup B_i]$ has diameter at most 6, so between any two points in $A'_i\cup B_i$ there is a path in $H$ whose degree sum is at most $6\alpha_i n$. It follows that for all $v\in A'_j$, we have $\abs{N_H(v)\cap B_i}\leq 6\alpha_i n$. Finally, since $B_j\subseteq N_H(A'_j)$, we have $\abs{B_i \cap B_j}\leq \sum_{v\in A_j'} |N_H(v)\cap B_i|\leq  \abs{A'_j} 6\alpha_i n \leq 6\alpha_i\alpha_j^{-1}n$.
\end{proof}

By the construction of $I$, for $j<i$ in $I$ we have $\alpha_i\alpha_j^{-1}\leq 2^{-2\log\log(n)}$, so by the Claim 2 and recalling that $|I| \leq t = \log (n)/7$, we have
\[
\sum_{\substack{
j<i\\
i,j\in I
}}{\abs{B_i\cap B_j}}\leq \abs{I}^2 \cdot 6\cdot 2^{-2\log\log(n)}n \leq 6t^2\log(n)^{-2} n< \frac{n}{8}.
\]

It follows that
\[
n\geq \abs{\bigcup_{i\in I}{B_i}}\geq \sum_{i\in I}{\abs{B_i}}-\sum_{\substack{
j<i\\
i,j\in I
}}{\abs{B_i\cap B_j}}
\geq \sum_{i\in I}{\frac{\delta_i(H)n}{16}}-\frac{n}{8}
\geq \frac{19n}{16}-\frac{n}{8}
> n
\]
which gives the required contradiction and completes the proof of the bound on $\theta(n)$. It is straightforward to deduce the bound on $\theta'(m)$ by noting that any $n$-vertex graph contains at most $n^2$ edges and $\theta'$ is a non-decreasing function, so $\theta'(n^2)\geq \theta(n)$ for all~$n$. 
\end{proof} 

\section{Interval colouring thickness upper bound}\label{sec:ub}

We prove Theorem~\ref{thm:ub} using an edge-decomposition into forests and regular bipartite subgraphs. Such graphs are interval colourable (regular bipartite graphs can be edge-decomposed into perfect matchings which can be taken as colour classes). To find large regular subgraphs of relatively dense graphs we rely on the following result of R\"odl and Wysocka~\cite{RW}.
\begin{theorem}[\cite{RW}]\label{thm:RW}
Let $\gamma\colon \N\to [0,1/2)$ satisfy $\gamma(n)=\omega(n^{-1/3})$ as $n\to\infty$. Then every $n$-vertex graph with at least $\gamma n^2$ edges contains an $\Omega(\gamma^3 n)$-regular subgraph.
\end{theorem}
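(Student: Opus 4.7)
The plan is to prove the theorem via the standard strategy of passing to an almost-regular bipartite subgraph and then extracting an exactly regular part. Let $d = \gamma n$ denote the target degree scale.

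A uniformly random $2$-colouring of $V(G)$ yields a bipartite subgraph with $\gamma n^2/2$ edges in expectation, so a deterministic choice gives a bipartite subgraph $H_0$ with parts $A_0, B_0$ and at least this many edges. Iteratively deleting vertices whose current degree is less than $d/8$ removes fewer than $\gamma n^2/8$ edges in total and yields a bipartite graph $H_1$ of minimum degree $\Omega(d)$ on both sides. The $A$-side degrees in $H_1$ are partitioned into $O(\log n)$ dyadic classes, and pigeonholing on each side produces a bipartite subgraph $H_2$ in which $A$-degrees lie in $[d_A,2d_A]$ and $B$-degrees lie in $[d_B,2d_B]$ for some $d_A,d_B = \Omega(d/\log^{O(1)} n)$, at the cost of a $\log^{O(1)} n$ factor in the edge count.

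The heart of the argument is then to extract from $H_2$ an $r$-regular subgraph with $r=\Omega(\gamma^3 n)$. My proposed route is a two-stage probabilistic construction: first sample each vertex of $A$ with an appropriate probability $p$, using Chernoff to force the $B$-degrees to concentrate around $pd_B$ up to an additive $O(\sqrt{pd_B\log n})$ fluctuation; then symmetrise by a similar sampling on the $B$-side, and finally trim the tails and apply a K\"onig--Hall / defect-matching decomposition to turn the resulting nearly-regular bipartite graph into an exactly $r$-regular one. The cubic exponent $\gamma^3$ arises from the combination of (i) the polylogarithmic loss from the dyadic step, (ii) the sampling probability $p$ needed for the concentration bounds to apply, and (iii) the further degree shrinkage needed to balance the two sides against one another; the hypothesis $\gamma = \omega(n^{-1/3})$ is precisely what guarantees that the resulting $r$ tends to infinity, so that the extracted regular subgraph is non-trivial.

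The main obstacle is this final extraction step: obtaining an \emph{exactly} regular subgraph while losing only a factor of roughly $\gamma^2$ relative to the minimum degree $\Omega(\gamma n)$. Deterministic trimming tends to destroy too many edges when one enforces exact regularity on both sides simultaneously, while a purely probabilistic construction achieves only approximate regularity. The delicate part is to calibrate the sampling probabilities and the matching decomposition so that the probabilistic concentration and the combinatorial extraction cohere at the target degree $r=\Omega(\gamma^3 n)$, which is precisely the regime in which the hypothesis $\gamma = \omega(n^{-1/3})$ is both necessary and sufficient.
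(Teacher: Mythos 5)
First, a point of reference: the paper does not prove this statement. Theorem~\ref{thm:RW} is imported verbatim from R\"odl and Wysocka~\cite{RW} and used as a black box, so there is no in-paper proof to compare yours against; your proposal has to stand on its own as a proof of the cited result.

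Judged on those terms, it has a genuine gap, and it sits exactly where you flag ``the main obstacle''. The reductions to a bipartite subgraph and to minimum degree $\Omega(\gamma n)$ are standard and fine, but the step that carries all the content of the theorem --- converting an almost-regular bipartite graph into an \emph{exactly} $r$-regular one with $r=\Omega(\gamma^3 n)$ --- is described rather than executed. Random vertex sampling plus Chernoff can only ever concentrate degrees in a window of width $\Theta(\sqrt{r\log n})$, and the ``K\"onig--Hall / defect-matching decomposition'' that is supposed to close this window is never specified: K\"onig's edge-colouring theorem decomposes a bipartite graph into matchings, but those matchings need not be perfect on the relevant vertex set, so a union of $r$ of them is not $r$-regular, and you give no mechanism that repairs this while losing only a constant factor. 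Your explanation of where the exponent $3$ comes from (polylog losses, sampling probabilities, ``balancing'') is a heuristic that does not correspond to any completed calculation, and the claim that $\gamma=\omega(n^{-1/3})$ is ``precisely'' what is needed is asserted, not derived. One standard way to actually close the gap is deterministic: pass to a subgraph of minimum degree at least $\gamma n$, which therefore has order $m\geq\gamma n$ and minimum degree at least a $\gamma$-fraction of $m$; split it into a balanced bipartite graph with parts of size $m/2$ and minimum degree $\Omega(\gamma m)$; and then apply the max-flow/$k$-factor criterion for balanced bipartite graphs (a spanning $k$-regular subgraph of a bipartite graph on parts $X,Y$ with $|X|=|Y|=m$ exists if and only if $e(S,T)\geq k\left(|S|+|T|-m\right)$ for all $S\subseteq X$, $T\subseteq Y$), verified for $k=\Omega(\gamma^2 m)=\Omega(\gamma^3 n)$ after restricting to a suitably dense balanced pair. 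This is where the factor $\gamma^3$ really comes from: one $\gamma$ from the order of the dense piece and $\gamma^2$ from the factor criterion. A secondary issue: your dyadic pigeonholing costs polylogarithmic factors, so even with the extraction completed you would obtain only $\Omega(\gamma^3 n/\log^{O(1)}n)$-regularity. That weaker form would still suffice for the applications in Section~\ref{sec:ub}, but it is not the theorem as stated.
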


Before proving Theorem~\ref{thm:ub}, we note that it is relatively straightforward to use Theorem~\ref{thm:RW} and the bound $\theta'(m)=O\left(\sqrt{m}\right)$~\cite{ACP,DHS} mentioned in the introduction to prove that $\theta(n)=n^{1-\Omega(1)}$. Indeed, let $G$ be an $n$-vertex graph and suppose that $e(G)=\Omega(n^{2-2/13})$. Then $G$ contains a bipartite subgraph with $\Omega(n^{2-2/13})$ edges, which in turn
contains an $\Omega(n^{1-6/13})$-regular (bipartite) subgraph by Theorem~\ref{thm:RW}. Such a subgraph contains $\Omega(n^{2-12/13})$ edges. 
 Delete these edges from~$G$, and repeat until the remaining graph has fewer than $n^{2-2/13}$ edges. This process terminates after $O(n^{2-2+12/13})=O(n^{12/13})$ steps, at which point we have partitioned $G$ into $O(n^{12/13})$ regular bipartite (and hence interval colourable) subgraphs, and a subgraph with $O(n^{2-2/13})$ edges. This final portion has interval colouring thickness $O(\sqrt{n^{2-2/13}})=O(n^{12/13})$ by the bound on $\theta'(m)$, so $\theta(G)=O(n^{12/13})$.

When upper bounding the number of steps in this process we assumed the worst: that there were on the order of $n^2$ edges in $G$ to begin with. If this were the case, however, then we could (and should) have taken the first few regular bipartite subgraphs to have more edges than we did. Thus, to make optimal use of Theorem~\ref{thm:RW} we must adjust the sizes of the regular bipartite subgraphs as we go through. Note that at the end of the process we appeal to a bound on $\theta'(m)$, so the quality of the bound on $\theta(n)$ that we can obtain from this argument will depend on the quality of our $\theta'(m)$ bound. The next lemma optimises this approach for any given polynomial upper bound on $\theta'(m)$.

\begin{lemma}\label{lem:vtx}
Suppose that $\theta'(m)= O(m^{\beta})$ for some $\beta\in(0,1/2]$. Then $\theta(n)\leq n^{\alpha+o(1)}$, where~$\alpha=\frac{10\beta}{5+ \beta}$.
\end{lemma}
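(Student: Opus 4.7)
The plan is to iteratively extract large regular bipartite subgraphs from $G$ using Theorem~\ref{thm:RW} and to handle the residual sparse graph via the hypothesised bound $\theta'(m)=O(m^{\beta})$. Fix a parameter $s\in(0,1/3)$ to be optimised at the end. Starting from $G$, at each step let $G'$ be the current graph on $V(G)$, set $m\defined e(G')$, and proceed so long as $m\geq n^{2-s}$: first pass to a bipartite subgraph of $G'$ with at least $m/2$ edges (by a standard greedy or random bipartition), and set $\gamma\defined m/(2n^2)\geq n^{-s}/2$, which satisfies $\gamma=\omega(n^{-1/3})$ since $s<1/3$. Applying Theorem~\ref{thm:RW} to this bipartite subgraph yields a $d$-regular (hence bipartite) subgraph $R$ with $d=\Omega(\gamma^3 n)=\Omega(m^3/n^5)$, and since $R$ has at least $d+1$ vertices, $e(R)\geq d(d+1)/2=\Omega(m^6/n^{10})$. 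Being regular and bipartite, $R$ decomposes into perfect matchings (by K\"onig's theorem) and is therefore interval colourable; we add $R$ to our target decomposition of $G$ and replace $G'$ by $G'\setminus E(R)$.

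The crux is bounding the number of iterations. Partition the process into dyadic phases indexed by $k=0,1,2,\dots$, where in phase $k$ the edge count lies in $[n^2/2^{k+1},\,n^2/2^k]$. Within phase $k$, each iteration removes $\Omega\bigl((n^2/2^{k+1})^6/n^{10}\bigr)=\Omega(n^2/2^{6k})$ edges, while the total number to be removed in the phase is at most $n^2/2^{k+1}$, so phase $k$ takes $O(2^{5k})$ iterations. Summing across the $s\log_2 n + O(1)$ phases needed to drive $m$ below $n^{2-s}$ gives a geometric total of $O(n^{5s})$ iterations, and hence $O(n^{5s})$ interval colourable regular bipartite subgraphs extracted. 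The residual graph then has fewer than $n^{2-s}$ edges, so by hypothesis it admits an edge-decomposition into $O\bigl((n^{2-s})^{\beta}\bigr)=O(n^{(2-s)\beta})$ further interval colourable subgraphs.

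Combining the two contributions yields $\theta(G)\leq n^{\max\{5s,\,(2-s)\beta\}+o(1)}$. Equalising the two exponents via $5s=(2-s)\beta$ gives $s=2\beta/(5+\beta)$ and $5s=10\beta/(5+\beta)=\alpha$, which is the required bound. One verifies that $\beta\leq 1/2$ implies $s\leq 2/11<1/3$, so the Rödl--Wysocka hypothesis $\gamma=\omega(n^{-1/3})$ is maintained throughout. The main obstacle is not conceptual but bookkeeping: converting the continuous-looking optimisation into an honest discrete iteration, absorbing the geometric-sum constants into the $n^{o(1)}$ factor, and confirming that the pessimistic lower bound $e(R)\geq d^2/2$ (which accounts for $R$ possibly spanning only $\Theta(d)$ vertices) is already strong enough to drive the argument to the claimed exponent.
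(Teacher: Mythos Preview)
Your proposal is correct and follows essentially the same strategy as the paper: iteratively extract regular bipartite subgraphs via Theorem~\ref{thm:RW}, use the pessimistic edge bound $e(R)\geq d^2/2$ on each, and once the residual graph is sparse enough invoke the assumed $\theta'$ bound. The only difference is bookkeeping --- you organise the iteration into dyadic density phases and sum a geometric series, whereas the paper tracks a sequence of thresholds $\alpha_i=\tfrac{\alpha}{5}(1-6^{-i})$ chosen so that each stage costs $O(n^{6\alpha_{i+1}-\alpha_i})=O(n^{\alpha})$ steps --- but both routes land on the same optimised exponent $\alpha=10\beta/(5+\beta)$.
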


\begin{proof}
Fix $\eps>0$. It is sufficient to show that every $n$-vertex graph has interval colouring thickness $O(n^{\alpha+\eps})$. Let $\alpha_i=\frac{\alpha}{5}\left(1-\frac{1}{6^i}\right)$ for each $i\in\N$, let $\alpha_0=0$, and suppose that $G$ is an $n$-vertex graph with $n^{2-\alpha_{i+1}}\leq e(G)\leq n^{2-\alpha_i}$ for some $i\in\N_0$. Then $G$ contains a bipartite subgraph with at least $n^{2-\alpha_{i+1}}/2$ edges which in turn has an $\Omega(n^{1-3\alpha_{i+1}})$-regular (bipartite) subgraph by Theorem~\ref{thm:RW} (note that $\alpha_{i+1}<1/3$). This subgraph has $\Omega(n^{2-6\alpha_{i+1}})$ edges, so if we delete these edges from $G$ and repeat until the remaining graph has fewer than $n^{2-\alpha_{i+1}}$ edges, then the process will terminate after $O(n^{2-\alpha_i}/n^{2-6\alpha_{i+1}})=O(n^{6\alpha_{i+1}-\alpha_i})=O(n^{\alpha})$ steps.

Let $C\in\N$ be large enough that $\alpha_C\geq \frac{\alpha}{5}-\beta^{-1}\eps$. Every $n$-vertex graph $G$ has at most $n^2=n^{2-\alpha_0}$ edges, so by repeatedly applying the above, we can remove at most $C\cdot O(n^{\alpha})=O(n^{\alpha})$ regular bipartite graphs from $G$ so that fewer than $n^{2-\alpha_C}$ edges remain. By the assumed bound on $\theta'(m)$, this remaining portion has interval colouring thickness $O(n^{\beta(2-\alpha_C)})=O(n^{\beta(2-\alpha/5+\beta^{-1}\eps)})=O(n^{\alpha+\eps})$.
\end{proof}

The next lemma converts an upper bound on $\theta(n)$ to one on $\theta'(m)$. The proof is based on the idea that every connected graph with $m$ edges is either dense enough that we can apply the bound on $\theta(n)$, or sparse enough that a spanning tree contains a large proportion of its edges.

\begin{lemma}\label{lem:edg}
Suppose that $\theta(n)=O(n^{\alpha})$ for some $\alpha\in(0,1]$. Then $\theta'(m)=O(m^{\beta})$
where $\beta=\frac{\alpha}{1+\alpha}$.
\end{lemma}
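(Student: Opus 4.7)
The plan is to decompose $G$ edge-disjointly into a sparse subgraph of bounded arboricity and a dense subgraph on few vertices, with the split governed by a threshold parameter $T$. The sparse part will decompose into $T$ forests, each of which is interval colourable since trees are, while the dense part will be small enough that applying the hypothesised bound $\theta(n) = O(n^\alpha)$ to it contributes a term of order $m^\beta$. In the spirit of the hint preceding the lemma, the forests in this decomposition play the role of the ``spanning trees'' extracted from the sparse components of $G$.

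Concretely, given an $m$-edge graph $G$, set $T = \ceil{m^\beta}$ and iteratively peel from $G$ any vertex of current degree at most $T$, continuing until no such vertex remains. Let $V^* \subseteq V(G)$ be the set of surviving vertices and $G^* = G[V^*]$. Every vertex of $G^*$ has degree greater than $T$ in $G^*$, so $\abs{V^*} < 2e(G^*)/T \leq 2m/T$, and the hypothesis gives $\theta(G^*) \leq \theta(\abs{V^*}) = O((m/T)^\alpha)$. Let $G_1$ be the subgraph of $G$ consisting of those edges with at least one peeled endpoint, so that $E(G)$ is the disjoint union of $E(G_1)$ and $E(G^*)$.

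Order the vertices of $G$ with those of $V^*$ first (in any order), followed by the peeled vertices in reverse peeling order. With respect to this ordering each vertex has at most $T$ earlier neighbours in $G_1$: for a peeled vertex, these earlier neighbours lie in its $G$-neighbourhood at the moment of its peeling, which had size at most $T$; and for a vertex $v \in V^*$, any earlier neighbour in $G_1$ would also lie in $V^*$, but edges within $V^*$ belong to $G^*$ rather than $G_1$. Thus $G_1$ is $T$-degenerate, and the standard argument (assign each back-edge of a vertex $w$ a colour in $\{1,\dots,T\}$ according to its rank among $w$'s back-edges) decomposes $G_1$ into $T$ forests, each of which is interval colourable.

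Summing the two contributions yields $\theta(G) \leq T + O((m/T)^\alpha)$; with $\beta = \alpha/(1+\alpha)$ the choice $T = \ceil{m^\beta}$ gives $(m/T)^\alpha = m^{\alpha(1-\beta)} = m^\beta$, so $\theta'(m) = O(m^\beta)$ as required. The step needing most care is verifying that the peeling order indeed exhibits $G_1$ as $T$-degenerate, particularly the point that edges internal to $V^*$ are excluded from $G_1$; the remainder is a balancing optimisation directly parallel to the one used in the proof of Lemma~\ref{lem:vtx}.
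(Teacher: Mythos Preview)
Your proof is correct but takes a different route from the paper's. The paper argues by induction on $m$: reducing to connected $G$, it either has $\abs{V(G)}\leq \ceil{\tfrac{1}{\beta}m^{1-\beta}}$, in which case the hypothesis on $\theta(n)$ applies directly, or $G$ is sparse enough to contain a spanning tree with $\ceil{\tfrac{1}{\beta}m^{1-\beta}}$ edges, which is removed and the inductive bound applied to the remainder via a mean value estimate. You instead perform a single global decomposition by extracting the $(T{+}1)$-core $G^*$: the core has few vertices because its minimum degree exceeds $T$, so the vertex bound handles it, while the complementary edge set $G_1$ is $T$-degenerate (as your peeling-order argument shows) and hence splits into $T$ forests. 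Your approach avoids the induction and the analytic step, at the cost of invoking degeneracy/arboricity; the paper's approach is more bare-handed but needs the somewhat delicate choice of $m'$ and the inequality $\lambda(m-m')^\beta\leq\lambda m^\beta-1$. Both balance the same trade-off and land on the same exponent.
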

\begin{proof}
Let $\lambda\geq 1$ be such that $\theta\left(\ceil{\frac{1}{\beta}x}\right)\leq \lambda x^\alpha$ for all $x \in [1,\infty)$. We will show that $\theta(G)\leq \lambda m^{\beta}$ for all graphs $G$ with $e(G)=m$. We proceed by induction on $m$. First, observe that if $G$ has connected components $C_1, \dots, C_k$, then $\theta(G)=\max_i \theta(C_i)$, so we may assume that $G$ is connected. If $\abs{V(G)}> m' \coloneqq \ceil{\frac{1}{\beta}m^{1-\beta}}$, then $G$ contains a tree $T$ with $m'$ edges. By induction and applying the mean value theorem to the function $x \mapsto x^\beta$, we obtain
\[
\theta(G-T)\leq \lambda(m-m')^{\beta}\leq\lambda (m^{\beta} - m'\beta m^{\beta-1})\leq \lambda(m^{\beta}-1)\leq \lambda m^\beta-1
\]
since $\lambda\geq 1$. As $T$ is interval colourable we have $\theta(G) \leq \lambda m^{\beta}$ as required. 
On the other hand, if $\abs{V(G)} \leq m'$, then since $\theta$ is monotonic we have $\theta(G)\leq \theta\left(m'\right)\leq \lambda m^{\alpha-\beta \alpha}=\lambda m^{\beta}$ by our choice of~$\lambda$.
\end{proof}

We can now prove Theorem~\ref{thm:ub} by playing the preceding lemmas off against each other. 

\begin{proof}[Proof of Theorem~\ref{thm:ub}] 
Let $\alpha_0=1$, then for each $i \in \N$ let $\beta_{i}=\frac{\alpha_{i-1}}{1+\alpha_{i-1}}$ and $\alpha_{i}=\frac{10 \beta_i}{5+\beta_i}=\frac{10 \alpha_{i-1}}{5+6\alpha_{i-1}}$. Noting that $\beta_i=\frac{10\beta_{i-1}}{5+11\beta_{i-1}}$ for $i\geq 2$, we see that $\alpha_i\in(0,1)$ and $\beta_i\in(0,1/2]$ for all $i\in\mathbb{N}$. Hence, if $\theta(n) \leq n^{\alpha_{i}+o(1)}$ for some $i\in\mathbb{N}_0$, then Lemma~\ref{lem:edg} yields $\theta'(m)\leq m^{\beta_{i+1}+o(1)}$. This allows us to deduce from Lemma~\ref{lem:vtx} that $\theta(G) \leq n^{\alpha_{i+1}+o(1)}$. Clearly $\theta(n) \leq n=n^{\alpha_0}$, so $\theta(n)\leq n^{\alpha_i+o(1)}$ and $\theta'(m)\leq m^{\beta_i+o(1)}$ for all $i\in\mathbb{N}$. The theorem now follows from the fact that $\alpha_i \to \frac{5}{6}$ and $\beta_i \to \frac{5}{11}$ as $i \to \infty$. 
\end{proof}

\section{The maximal number of colours in an interval colouring of a planar graph}
\label{sec:planar}

In this section we will prove Theorem~\ref{thm:3/2result} via the following slightly more general result.

\begin{theorem}
\label{thm:generalisation-interval-colourable}
    Let $k\in\mathbb{R}_{\geq 0}$ and let $G$ be a graph on $n\geq 2$ vertices such that every subgraph $H\subseteq G$ on at least three vertices satisfies $|E(H)|\leq k(|V(H)|-2)$. 
    If $G$ admits an interval colouring, then $t(G)\leq (k/2)n + 1 - k$.
\end{theorem}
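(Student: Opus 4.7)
My plan is to proceed by induction on the number of vertices $n$. Let $c \colon E(G) \to \{1, \ldots, t\}$ be an interval colouring of $G$ attaining $t = t(G)$ colours, and write $[a(v), b(v)]$ for the colour interval at each vertex $v$. The base case $n = 2$ is immediate, since $G$ has at most one edge, giving $t(G) \leq 1 = (k/2)\cdot 2 + 1 - k$. I may also assume $t \geq 3$, since the bounds for $t \leq 2$ follow directly from the density hypothesis.

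For the inductive step, I would delete from $G$ the two extreme-colour matchings $M_1$ and $M_t$ of edges coloured $1$ and $t$ respectively, obtaining a subgraph $G'$ on which the restriction of $c$ is an interval colouring using the $t - 2$ colours $\{2, \ldots, t - 1\}$. Each vertex $v$ with $a(v) = 1$ loses its colour-$1$ edge and its interval shifts to $[2, b(v)]$, and symmetrically for $b(v) = t$. A vertex becomes isolated in $G'$ exactly when $[a(v), b(v)] \subseteq \{1, t\}$, which (since $t \geq 3$) forces $v$ to be a pendant of $G$ whose unique edge has colour $1$ or colour $t$; let $p$ denote the number of such pendants. After discarding these isolated vertices, $G'$ has $n' = n - p$ vertices and inherits the density hypothesis from $G$. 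Applying the inductive hypothesis gives $t - 2 \leq (k/2)(n - p) + 1 - k$, and hence $t \leq (k/2)n + 3 - k - (k/2)p$, which yields the desired bound whenever $p \geq 4/k$.

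The main obstacle is handling the complementary case $p < 4/k$, and especially $p = 0$, exemplified by $K_4$ with $k = 3$. In this situation the induction produces slack rather than a sharp bound, because $t(G')$ may be considerably smaller than its inductive bound allows. To close the argument I would augment the induction with a direct application of the density constraint to a carefully chosen subgraph $H^{*} \subseteq G$. Promising candidates are $H^{*} = G[V(M_1) \cup V(M_t)]$, or the subgraph induced by the vertices seeing both colours $1$ and $t$ (whose intervals necessarily span $[1,t]$ and who therefore have degree exactly $t$ in $G$); the absence of special pendants forces these subgraphs to be dense, so that the inequality $e(H^{*}) \leq k(|V(H^{*})| - 2)$ produces the missing factor and recovers $2(t - 1) \leq k(n - 2)$ directly. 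Pinning down the correct choice of $H^{*}$ and verifying that its density gain compensates exactly for the inductive slack of $2$ is the crux of the argument; the tight constructions of the first author recalled later in this section should guide this choice and ensure the resulting bound is sharp.
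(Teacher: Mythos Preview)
Your inductive scheme stalls exactly where you say it does, and the proposed rescue via a subgraph $H^{*}$ does not close the gap. Take your own test case $K_4$ with an interval colouring using colours $\{1,2,3,4\}$: here $p=0$, your first candidate $H^{*}=G[V(M_1)\cup V(M_t)]$ is all of $K_4$ and the density inequality $6\le 3(4-2)$ is an equality with no slack, while your second candidate (vertices whose interval contains both $1$ and $t$) is empty. More generally, peeling off the two extreme colour classes removes at most $n$ edges and decreases the colour count by exactly $2$; the induction then needs $(k/2)p\ge 2$, but there is no structural reason why many pendants must be killed. The density hypothesis is a statement about edge counts, and invoking it on $H^{*}$ produces an edge inequality that does not convert into the vertex saving your induction requires.

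The paper exploits the density hypothesis at a different point. In a minimal counterexample one has $t>(k/2)n+1-k$, so $e(G)\le k(n-2)<2(t-1)$; hence not every interior colour $c\in\{2,\dots,t-1\}$ can be used twice, and some such $c$ occurs on a unique edge $vw$. The interval property then forces every vertex of $V(G)\setminus\{v,w\}$ to see only colours below $c$ or only colours above $c$, so $G$ splits into two induced interval-coloured subgraphs $G_1,G_2$ overlapping precisely in $\{v,w\}$, with $t(G_1)+t(G_2)\ge t+1$ and $|V(G_1)|+|V(G_2)|=n+2$. Applying the inductive bound to each piece and adding gives the result with no residual slack. The key difference from your approach is that splitting at an interior bottleneck colour costs exactly two shared vertices independently of any pendant count, whereas extreme-colour peeling loses control of precisely that quantity.
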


Note that a subgraph of a planar graph is also a planar graph, and that any planar graph $H$ on at least three vertices satisfies $|E(H)|\leq 3(|V(H)|-2)$, so plugging $k=3$ into \Cref{thm:generalisation-interval-colourable} gives \Cref{thm:3/2result}. 
We observe that for $k<1$ the result is trivial, since the condition on $G$ implies (for $n\geq 3$) that $G$ has no edges. Also, for $1\leq k<2$, the condition implies that $G$ is a matching, so the result is again trivial. For $2\leq k<3$ the graph $G$ is triangle-free, so the result is superseded by the result of Asratian and Kamalian~\cite{AK1} stating that such graphs have $t(G) \leq \abs{V(G)}-1$. Finally, for $k\geq 4$ \Cref{thm:generalisation-interval-colourable} is beaten by the result of Giaro, Kubale, and Ma\l afiejski~\cite{GKM} that $t(G) \leq 2\abs{V(G)}-4$.

\begin{proof}[Proof of \Cref{thm:generalisation-interval-colourable}.]
Fix $k\geq 0$ and
assume for a contradiction that $G$ is a counterexample to the theorem on the fewest possible vertices.
Let $n=\abs{V(G)}$ and let $G$ be interval coloured with colours $1,\dotsc,t$ (all used at least once), where $t=t(G)\geq 2$. Note that clearly $n\geq 3$.

We first claim that there is a colour $c$ with $1 < c < t$ such that there is a unique edge of $G$ of colour $c$. Indeed, if this were not the case, then all colours except perhaps 1 and $t$ would occur at least twice, and thus $e(G)\geq 2(t-2)+2=2t-2> k(n-2)$. This contradicts the assumption that $e(G) \leq k(n-2)$.

Hence, let $vw$ be the unique edge of colour $c$.
 Let $V_1$ be the set of vertices in $V(G)\setminus\{v,w\}$ that are incident only to edges of colours smaller than $c$ and 
  let $V_2$ be the set of vertices in $V(G)\setminus\{v,w\}$ that are incident only to edges of colours greater than $c$. 
  We see that $V(G) = V_1\cup V_2 \cup \{v, w\}$. Indeed, otherwise there is a vertex $y\in V(G)\setminus\{v,w\}$ that is incident to an edge of colour greater than $c$ and to an edge of colour less than $c$. Since $vw$ is the only edge of colour $c$, 
  $y$ is not incident to an edge of colour $c$ and the set of colours on edges incident to $y$ do not form an interval. 
From the definition of $V_1$ and $V_2$ we see that there are no edges between these two sets. Hence, the induced colouring on $G_i=G[V_i\cup\{v,w\}]$ is an interval colouring for $i=1,2$.

Note that $\abs{V(G_1)}+\abs{V(G_2)}=n+2$ and $t(G_1)+t(G_2) \geq t+1$. 
Since $1<c<t$, we have $V_1, V_2\neq \emptyset$, and thus $3\leq |V(G_i)|<n$ for $i=1,2$. By the minimality of $G$, it follows that
$t(G_i)\leq (k/2)\abs{V(G_i)} +1 -k$ for $i=1,2$. Thus
 $t \leq t(G_1)+t(G_2)-1\leq (k/2)(\abs{V(G_1)}+\abs{V(G_2)})+1-2k=(k/2)n+1-k$,  a contradiction.
\end{proof}

The bound in \Cref{thm:3/2result} is attained for even $n$
by any graph $G_{2s}$ of the form shown in Figure~\ref{fig:maximal-example-labelled}. This figure also demonstrates how to interval colour these graphs. To see that they are indeed planar, consider the drawing as shown in Figure~\ref{fig:maximal-example-planar}, where the dashed lines marked in red should be drawn to loop around the left-hand side of the graph.
In fact, more extremal graphs can be obtained by removing any set of dashed edges shown in blue in Figure~\ref{fig:maximal-example-labelled} (the extremal examples from \cite{Axenovich2002} correspond to including no blue edges).
Indeed, any such graph is interval colourable since in the colouring shown in the figure, the colours of the blue edges are either the minimal or maximal colour at every vertex they are incident to. 

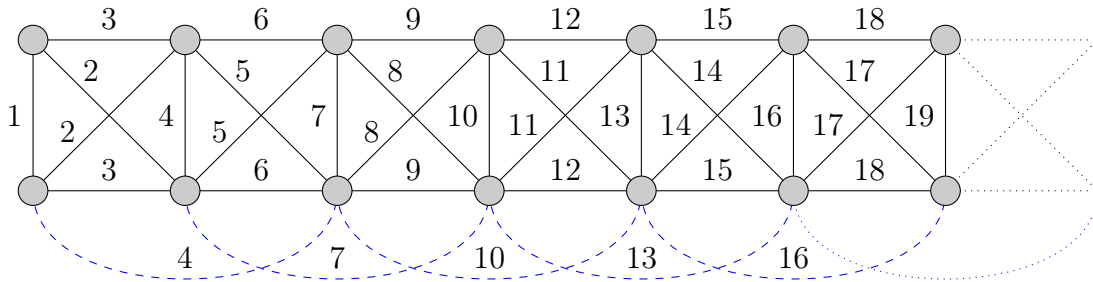
\begin{figure}[ht]
\centering
\begin{tikzpicture}[scale=2,darkstyle/.style={circle,draw,fill=gray!40,minimum size=10}]
\path[-] 
    (1,1) edge ["1"] (1,2) 
        edge ["3"] (2,1) 
        edge [pos=0.3, inner sep=1] ["2"] (2,2)
    (2,1) edge ["4"] (2,2) 
        edge ["6"] (3,1) 
        edge [pos=0.3, inner sep=1] ["5"] (3,2)
    (3,1) edge ["7"] (3,2) 
        edge ["9"] (4,1) 
        edge [pos=0.3, inner sep=1] ["8"] (4,2)
    (4,1) edge ["10"] (4,2) 
        edge ["12"] (5,1) 
        edge [pos=0.35, inner sep=1] ["11"] (5,2)
    (5,1) edge ["13"] (5,2) 
        edge ["15"] (6,1) 
        edge [pos=0.35, inner sep=1] ["14"] (6,2)
    (6,1) edge ["16"] (6,2) 
        edge ["18"] (7,1) 
        edge [pos=0.35, inner sep=1] ["17"] (7,2)
    (7,1) edge ["19"] (7,2)
        edge [dotted] (8,1)
        edge [dotted] (8,2)
    (1,2) edge ["3"] (2,2) 
        edge [pos=0.3, inner sep=1.5] ["2"] (2,1)
    (2,2) edge ["6"] (3,2) 
        edge [pos=0.3, inner sep=1.5] ["5"] (3,1)
    (3,2) edge ["9"] (4,2) 
        edge [pos=0.3, inner sep=1.5] ["8"] (4,1)
    (4,2) edge ["12"] (5,2) 
        edge [pos=0.3, inner sep=1.5] ["11"] (5,1)
    (5,2) edge ["15"] (6,2) 
        edge [pos=0.3, inner sep=1.5] ["14"] (6,1)
    (6,2) edge ["18"] (7,2) 
        edge [pos=0.3, inner sep=1.5] ["17"] (7,1)
    (7,2) edge [dotted] (8,2)
        edge [dotted] (8,1)
    (1,1) edge [dashed, bend right=90, draw=blue, "4"] (3,1)
    (2,1) edge [dashed, bend right=90, draw=blue, "7"] (4,1)
    (3,1) edge [dashed, bend right=90, draw=blue, "10"] (5,1)
    (4,1) edge [dashed, bend right=90, draw=blue, "13"] (6,1)
    (5,1) edge [dashed, bend right=90, draw=blue, "16"] (7,1)
    (6,1) edge [dashed, bend right=90, draw=blue, dotted] (8,1);
\foreach \x in {1,...,7} 
    \foreach \y in {1,2} 
        {\node [darkstyle] (\x\y) at (\x,\y) {};}
\end{tikzpicture}
\caption{A planar graph $G_{2s}$ on $2s$ vertices attaining the maximum value of $t(G)$, shown here for $s=7$. Any subset of the blue dashed edges can be removed to find another graph attaining this maximum.}
\label{fig:maximal-example-labelled}
\end{figure}

\begin{figure}[ht]
\centering
\begin{tikzpicture}[scale=2,darkstyle/.style={circle,draw,fill=gray!40,minimum size=10}]
\path[-] 
    (1,1) edge (1,2) 
        edge (2,1) 
        edge (2,2)
    (2,1) edge [draw=red,dashed] (2,2) 
        edge (3,1) 
        edge [draw=red,dashed] (3,2)
    (3,1) edge  (3,2) 
        edge (4,1) 
        edge (4,2)
    (4,1) edge [draw=red,dashed] (4,2) 
        edge (5,1) 
        edge [draw=red,dashed] (5,2)
    (5,1) edge (5,2) 
        edge (6,1) 
        edge (6,2)
    (6,1) edge [draw=red,dashed] (6,2) 
        edge (7,1) 
        edge [draw=red,dashed] (7,2)
    (7,1) edge (7,2)
    (1,2) edge (2,2) 
        edge [draw=red,dashed] (2,1)
    (2,2) edge (3,2) 
        edge (3,1)
    (3,2) edge (4,2) 
        edge [draw=red,dashed] (4,1)
    (4,2) edge (5,2) 
        edge (5,1)
    (5,2) edge (6,2) 
        edge [draw=red,dashed] (6,1)
    (6,2) edge (7,2) 
        edge (7,1)
    (1,1) edge [bend left=23] (3,1)
    (2,1) edge [bend right=45] (4,1)
    (3,1) edge [bend left=23] (5,1)
    (4,1) edge [bend right=45] (6,1)
    (5,1) edge [bend left=23] (7,1);
\foreach \x in {1,...,7} 
    \foreach \y in {1,2} 
        {\node [darkstyle] (\x\y) at (\x,\y) {};}
\end{tikzpicture}
\caption{An illustration to show that $G_{2s}$ is planar: the red dashed edges should loop around the left-hand side of the graph.}
\label{fig:maximal-example-planar}
\end{figure}
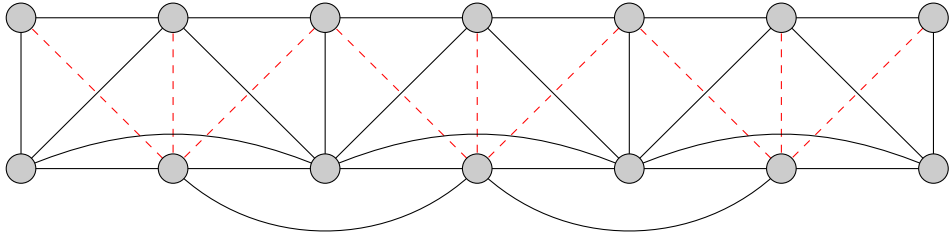

To see that the bound in \Cref{thm:3/2result} is tight (up to rounding) for odd $n$, take any extremal graph $G$ on an even number of vertices, interval colour it using all the colours $1,\dots,t$ (where $t=t(G)$), find an edge $uv$ which receives colour $t$, and add a new vertex $w$ adjacent only to $v$. The interval colouring of the original graph can now be extended to an interval colouring of the new graph by assigning the new edge colour $t+1$.

\section{Concluding remarks}

We have improved the bounds on the maximum interval colouring thickness of a graph on $n$ vertices, showing that $c\log(n)/\log\log(n) \leq \theta(n) \leq n^{5/6+o(1)}$ for some constant $c>0$. Clearly there remains a large gap between the bounds, which it would be interesting to narrow. We are particularly interested to know whether $\theta(n)$ is polynomial in $n$, and we believe this is not the case.

\begin{conjecture}\label{conj:o1}
  It holds that $\theta(n)= n^{o(1)}$.
\end{conjecture}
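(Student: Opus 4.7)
The strategy I would adopt is to refine the iteration underlying Theorem~\ref{thm:ub}. That argument has two sources of polynomial slack: first, R\"odl--Wysocka extracts only a $\gamma^3 n$-regular subgraph from a graph with edge density $\gamma$, costing a cubic loss; and second, the bound $\theta'(m)\leq O(\sqrt m)$ is invoked for the sparse remainder. If one could improve R\"odl--Wysocka to extract a $\gamma^{1+o(1)} n$-regular subgraph, then a careful re-tuning of Lemmas~\ref{lem:vtx} and~\ref{lem:edg} should bootstrap from $\theta(n) = n^{o(1)}$ back to itself, so the conjecture would essentially reduce to an almost-optimal regular-subgraph extraction theorem for dense graphs.

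Failing a direct attack on R\"odl--Wysocka, I would try a hierarchical approach. First partition the edges of $G$ into $O(\log n)$ buckets according to the geometric mean of endpoint degrees, then within each bucket refine further into near-regular bipartite subgraphs by bucketing each side by degree. For such near-regular bipartite graphs one can attempt to extract an almost-spanning regular subgraph of near-optimal degree by combining a random edge weighting with a Hall-type matching argument, incurring only logarithmic loss per level. Iterating by halving the degree each time should decompose each near-regular bipartite graph into regular bipartite (hence interval colourable) pieces plus a subpolynomial residual to be handled by induction. A more refined version would alternate regular-subgraph extraction with forest extraction, using Lemma~\ref{lem:edg} to trade vertex-count bounds for edge-count bounds at the right cadence.

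The main obstacle is that an improvement of R\"odl--Wysocka of the form needed is a well-known open problem in its own right, and the hierarchical alternative must cope with the lower-bound construction of Theorem~\ref{thm:lb}: that construction features many nested near-biregular bipartite layers with geometrically decreasing degrees, and Observation~\ref{obs:low_deg_path} shows exactly how such layering obstructs interval colourability. Any proof of the conjecture must therefore disentangle such layered structures using $n^{o(1)}$ interval colourable pieces, which seems to require techniques tailored to interval colourability rather than off-the-shelf decomposition. One tempting concrete direction is to exploit the freedom in choosing the colour interval at each vertex to build compatible colourings across several of the geometric layers simultaneously --- say, by assigning the layer indexed by $i$ a colour window of geometric size $\alpha_i^{-1}$ --- but producing a consistent global interval colouring from such per-layer templates is precisely the difficulty and, to my knowledge, there is no existing framework for it.
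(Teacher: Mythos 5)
The statement you are addressing is an open conjecture: the paper does not prove it, and explicitly presents it as a question it cannot resolve (indeed the authors only prove $\theta(n)\leq n^{5/6+o(1)}$ and believe, but cannot show, that the truth is $n^{o(1)}$). Your proposal is a research sketch rather than a proof, and you acknowledge as much: the two load-bearing steps --- an improvement of R\"odl--Wysocka from $\gamma^3 n$-regular to $\gamma^{1+o(1)}n$-regular subgraphs, and the extraction of near-optimal regular subgraphs from near-regular bipartite graphs with only logarithmic loss --- are both left unproved, and the first is itself a well-known open problem. Nothing in the proposal closes either gap, so no part of the conjecture is actually established.

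There is also a concrete structural obstruction to your second route that the paper itself records. You propose to ``decompose each near-regular bipartite graph into regular bipartite (hence interval colourable) pieces plus a subpolynomial residual,'' i.e.\ ultimately to decompose $G$ into forests and regular graphs. But the paper's Observation~6 states that every edge-decomposition of $K_{n,\sqrt{n}}$ into forests and regular graphs requires at least $(1-o(1))\sqrt{n}$ parts, so this strategy cannot beat $\sqrt{n}$ no matter how efficiently the regular pieces are extracted; in particular no strengthening of R\"odl--Wysocka alone can yield $n^{o(1)}$ by this method. To get around this you would need to use \emph{biregular} pieces as interval colourable (or bounded-thickness) building blocks, but that is exactly Hansen's conjecture (respectively Conjecture~8 of the paper), both of which are open. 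Your closing remarks about per-layer colour windows correctly identify where the difficulty lies, but they do not supply an argument. In short: the proposal identifies sensible directions and genuine obstacles, but it does not constitute a proof of the conjecture, which remains open.
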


We proved our upper bound on $\theta(n)$ by showing that any graph could be edge-decomposed into the desired number of regular bipartite graphs and forests (which are all interval colourable).
However, no bound of the form $\theta(n)=o(\sqrt{n})$ can be obtained by decomposing only into these two types of graphs, as the following example shows.

\begin{observation}
Every edge-decomposition of $K_{n,\sqrt{n}}$ into forests and regular graphs has at least $(1-o(1))\sqrt{n}$ parts.
\end{observation}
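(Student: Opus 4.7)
The plan is to bound the number of edges in any single part of such a decomposition, then divide the total edge count by this bound. Writing $A$ and $B$ for the two sides of $K_{n,\sqrt{n}}$ with $|A|=n$ and $|B|=\sqrt{n}$, the total edge count is $e(K_{n,\sqrt{n}})=n^{3/2}$. I would aim to show that every part contains at most $n+\sqrt{n}-1$ edges, which immediately yields that any such decomposition uses at least $n^{3/2}/(n+\sqrt{n}-1)=(1-o(1))\sqrt{n}$ parts, as required.

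Disposing of the forest case is routine: any forest on a subset of the $n+\sqrt{n}$ vertices has at most $n+\sqrt{n}-1$ edges. For the regular case, let $R$ be a $d$-regular part. We may assume $d\geq 1$, so that $R$ has no isolated vertices on its own vertex set, which decomposes as $A'\cup B'$ with $A'\subseteq A$ and $B'\subseteq B$. Counting edges from each side of this bipartition gives $d|A'|=d|B'|$, so $|A'|=|B'|\leq |B|=\sqrt{n}$. Consequently $e(R)=d|A'|\leq |A'|^2\leq n$.

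The only subtle point is noticing that a non-trivial regular subgraph of a bipartite graph with unequal sides must occupy the same number of vertices on each side, and is therefore ``capped'' in size by the smaller side; once this observation is made, the proof reduces to the arithmetic above. Since the bounds for the two kinds of parts are comparable (both are $n+O(\sqrt{n})$), no delicate case analysis is needed.
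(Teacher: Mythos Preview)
The paper states this observation without proof in the concluding remarks, so there is nothing to compare against directly; your argument is correct and is almost certainly the intended one. The two ingredients are exactly those you identify: a forest on at most $n+\sqrt{n}$ vertices has at most $n+\sqrt{n}-1$ edges, and a $d$-regular subgraph of a bipartite graph with parts $A',B'$ satisfies $d|A'|=e(R)=d|B'|$, forcing $|A'|=|B'|\leq\sqrt{n}$ and hence $e(R)=d|A'|\leq|A'|\cdot|B'|\leq n$ (using $d\leq|B'|$). Dividing $n^{3/2}$ by the common upper bound $n+O(\sqrt{n})$ gives the result.
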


This suggests that to further improve the upper bound on $\theta(n)$, it might be useful to prove an upper bound on the interval colouring thickness of biregular graphs (where a graph is \emph{biregular} if it is bipartite and all vertices in the same part have the same degree). It is a longstanding conjecture of Hansen~\cite{H} (see also~\cite{JT,STSF}) that such graphs are, in fact, interval colourable.

\begin{conjecture}[\cite{H}]
Every biregular graph is interval colourable.
\end{conjecture}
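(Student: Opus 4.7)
The plan is to reduce Hansen's conjecture to a colour-shifting problem on a fixed proper $a$-edge colouring and to attack it via Kempe-style swaps between consecutive colours. Fix an $(a,b)$-biregular graph $G$ with bipartition $(A,B)$, where every vertex of $A$ has degree $a$ and every vertex of $B$ has degree $b$, and assume without loss of generality that $a\geq b$. By König's edge-colouring theorem, $G$ admits a proper edge colouring $c\colon E(G)\to\{1,\dotsc,a\}$, and since each $A$-vertex has degree exactly $a$ and sees $a$ distinct colours, its palette is automatically the full interval $\{1,\dotsc,a\}$. The task is therefore to modify $c$ so that for every $v\in B$ the $b$ colours appearing at $v$ form an interval of length $b$ inside $\{1,\dotsc,a\}$.

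I would first dispose of the divisibility case $b\mid a$: here $G$ decomposes into $a/b$ edge-disjoint bipartite $b$-regular subgraphs (for example by repeatedly extracting a $b$-regular factor via Hall's theorem applied to the multigraph of colour classes), and colouring the $j$-th factor with the consecutive colours $\{(j-1)b+1,\dotsc,jb\}$ produces an interval colouring using $a$ colours in total. For the general case I would start from an arbitrary proper $a$-edge colouring of $G$ and introduce the potential
\[
\Phi(c)=\sum_{v\in B}\bigl(\max c(v)-\min c(v)-b+1\bigr),
\]
where $c(v)$ denotes the set of colours incident to $v$. This quantity is nonnegative and vanishes precisely when $c$ is an interval colouring. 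The heart of the argument would be a lemma asserting that whenever $\Phi(c)>0$ there exist two consecutive colours $i,i+1$ and a connected component $P$ of the subgraph $G_{i,i+1}$ on those two colour classes for which swapping $i$ and $i+1$ along $P$ strictly decreases $\Phi$. Any such Kempe swap preserves properness and leaves every $A$-palette as the full interval $\{1,\dotsc,a\}$, so iterating drives $\Phi$ to zero and produces an interval colouring.

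The main obstacle, and the reason Hansen's conjecture has resisted attack for so long, is exactly this local-improvement lemma: a Kempe swap that shortens the palette interval at one $B$-vertex can simultaneously enlarge the interval at another. One route forward is to replace $\Phi$ with a lexicographic potential that attends to the widest gaps first, or to run a discharging argument on a minimal counterexample in order to force the two-coloured subgraphs $G_{i,i+1}$ to have a very restrictive structure (only short alternating cycles with matched endpoint types, say) and then to derive a contradiction by a global count. A semi-random alternative would be to adapt the R\"odl--Wysocka theorem used in Section~\ref{sec:ub}: repeatedly extract a random near-$b$-regular bipartite subgraph of what remains, interval-colour it with $b$ fresh consecutive colours, delete it, and iterate while controlling the discrepancy between the successive palette intervals at each $B$-vertex. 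Making any such strategy robust against the extremal parity and divisibility obstructions that are known to rule out interval colourability for general bipartite graphs is the crux of the problem.
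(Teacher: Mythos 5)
This statement is a conjecture (due to Hansen, 1992) which the paper explicitly records as a longstanding open problem; the paper contains no proof of it, and none is known, so there is nothing to compare your argument against except its own internal completeness. On that score, what you have written is a research programme rather than a proof, and you say as much yourself: the entire argument hinges on the ``local-improvement lemma'' asserting that whenever $\Phi(c)>0$ some Kempe swap on two consecutive colour classes strictly decreases $\Phi$, and you leave this lemma unproved while correctly identifying it as the main obstacle. That gap is fatal rather than technical. In your setup every $A$-vertex has full palette $\{1,\dotsc,a\}$ and hence degree exactly $2$ in $G_{i,i+1}$, so every path component of $G_{i,i+1}$ has both endpoints in $B$; swapping along such a path exchanges the roles of $i$ and $i+1$ at both endpoints simultaneously, and it is entirely possible that every available swap lowers the defect at one endpoint while raising it at the other, leaving $\Phi$ stuck at a strictly positive local minimum. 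A monotone potential argument of exactly this flavour is the first thing one tries on this conjecture, and the lexicographic and discharging variants you mention are named but not carried out.

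Two smaller points. The divisibility case $b\mid a$ is indeed a known theorem (Hanson, Loten, and Toft), but your justification---``repeatedly extracting a $b$-regular factor via Hall's theorem applied to the multigraph of colour classes''---does not yet constitute an argument: since $G$ is not regular, the $b$-regular pieces cannot be spanning, and what you actually need is a partition of $B$ into $a/b$ classes such that every $A$-vertex has exactly $b$ neighbours in each class, which requires a genuine (if standard) equalised-colouring argument. Finally, the ``semi-random alternative'' via the R\"odl--Wysocka theorem cannot establish the conjecture even in principle: that theorem extracts a single regular subgraph of a dense graph with no control over which $B$-vertices it covers, let alone over aligning the palette intervals of successive pieces at every vertex of $B$.
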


We pose a weaker conjecture.

\begin{conjecture}\label{conj:bireg}
There exists an absolute constant $C$ such that $\theta(G)\leq C$ for every biregular graph $G$.
\end{conjecture}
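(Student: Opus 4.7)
The plan is to attack the conjecture by edge-decomposing any biregular graph into a bounded number of interval colourable subgraphs, with the aim of ultimately reducing to regular bipartite graphs, which are always interval colourable. Let $G$ be $(a,b)$-biregular with parts $A$ and $B$ of sizes $M$ and $N$ satisfying $aM=bN$, and assume without loss of generality that $a\geq b$. The case $a=b$ is immediate since $G$ is then regular bipartite and hence class~1 by K\"onig's theorem.

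First I would try to reduce to the regime where the degree ratio $a/b$ is bounded by an absolute constant. By K\"onig's edge-colouring theorem, $G$ decomposes into $a$ matchings; taking any $b$ of them yields a subgraph $H$ that is $b$-regular on $A$ while $G-H$ is $(a-b)$-regular on $A$. The idea is to show that such an $H$ is either itself interval colourable or splits into $O(1)$ interval colourable pieces, and then to iterate so that the ratio decreases geometrically. After $O(1)$ rounds this should deposit $G$ into a bounded number of ``near-balanced'' biregular residues, each with $a/b$ at most some absolute constant.

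Second, for the near-balanced regime I would attempt a direct interval colouring. One plausible route is to extract, via Hall's theorem, a maximal regular bipartite subgraph spanning $A'\cup B'$ with $|A'|=|B'|$, interval colour it canonically using $\{1,\ldots,b\}$, and then greedily extend one leftover edge at a time, exploiting the bounded slack to keep the colour set an interval at each vertex. A complementary route is to randomly partition $B$ into a bounded number of parts and hope that each induced bipartite subgraph is interval colourable with positive probability; some control over this might be obtainable from concentration, analogously to how Lemma~\ref{lem:low_diam} is used in Section~\ref{sec:lb}.

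The main obstacle is the notorious fragility of interval colourings: adjoining a single edge can destroy the interval property at many vertices, and there is no known structural characterisation of interval colourable bipartite graphs. In particular, the second step essentially asks for a constant-ratio version of Hansen's conjecture, which has been open for decades, and any decomposition approach must avoid the Sevastianov-type bipartite obstructions to interval colourability. Thus a full resolution of Conjecture~\ref{conj:bireg} will most likely require either substantial progress on Hansen's conjecture or a genuinely new decomposition technique. A realistic intermediate milestone, and the natural first target of this plan, is to establish $\theta(G)=O(1)$ for $(a,b)$-biregular graphs under the assumption that $a/b$ is bounded by an absolute constant.
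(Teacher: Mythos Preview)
The statement you are addressing is posed in the paper as a \emph{conjecture}, not a theorem: it is introduced as a weakening of Hansen's longstanding conjecture that every biregular graph is interval colourable, and the paper offers no proof whatsoever. There is therefore nothing in the paper to compare your proposal against.

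Your proposal is, as you yourself concede in its final paragraph, a research outline rather than a proof, and the central difficulty is left unresolved. Two concrete gaps deserve mention. First, the K\"onig reduction does not work as stated: taking any $b$ of the $a$ colour classes yields a subgraph $H$ that is $b$-regular on $A$, but the degrees in $B$ can be anything between $0$ and $b$, so $H$ is not biregular and the ``ratio'' you wish to iterate on is undefined for the pieces; the decomposition does not stay inside the class of biregular graphs, and there is no reason such an $H$ should be interval colourable or decompose into $O(1)$ interval colourable graphs. Second, the ``near-balanced'' step --- extracting a spanning regular subgraph and extending greedily, or randomly partitioning $B$ --- is precisely where the difficulty lies: bounded degree ratio does not by itself rule out Sevastianov-type obstructions, and this special case is already essentially as hard as the full conjecture. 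In short, the proposal does not constitute a proof, and since the paper contains none either, the conjecture remains open.
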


A proof of Conjecture~\ref{conj:bireg} might pave the way to an improved bound on $\theta(n)$ via decompositions into forests and biregular graphs. We ask whether this strategy has the potential to prove Conjecture~\ref{conj:o1}.

 \begin{question}
Can every $n$-vertex graph be edge-decomposed into $n^{o(1)}$ forests and biregular graphs?
\end{question}

B\'ela Bollob\'as~\cite{BB} has remarked to us that the notion of an interval colouring could be generalised in the following way: given a constant $\alpha \geq 1$, we say that a graph $G = (V, E)$ is \emph{$\alpha$-interval colourable} if there exists a proper edge colouring $c : E \to \mathbb{Z}$ such that for every vertex $x$, the set $\{c(xy) : xy \in E\}$ of colours incident to $x$ is contained in an interval of $\mathbb{Z}$ of size at most $\alpha d(x)$. Defining $\theta_\alpha(G)$ in the obvious way, one only needs to change the proof of \Cref{thm:lb} very slightly to obtain the bound $\theta_\alpha(G)\geq c_\alpha (\log n)^{1-o(1)}$ for some constant $c_\alpha>0$ depending only on $\alpha$. It would be interesting, however, to see if one can prove a stronger upper bound than \Cref{thm:ub} in this setting.

In regard to the section on planar graphs, we pose the following problem of determining all extremal examples for \Cref{thm:3/2result}.

\begin{problem}
    Characterise the $n$-vertex planar graphs $G$ for which $t(G)=\floor{3n/2}-2$.
\end{problem}

\nopagebreak

\section*{Acknowledgements} The first author thanks the Mathematical Institute at the University of Oxford for their hospitality. The third and fourth authors would like to thank their PhD supervisor B\'ela Bollob\'as for his support.

\end{document}